\documentclass[11pt]{amsart}

\usepackage{hyperref}
\usepackage{graphicx,subfigure,epsfig}
\usepackage{subfigure}
\usepackage{epstopdf}
\usepackage{mathtools}
\DeclarePairedDelimiter\abs{\lvert}{\rvert}

\usepackage{amssymb,amsmath}
\usepackage{caption}
\usepackage{float}
\usepackage[usenames, dvipsnames]{color}
\usepackage{bm}
\usepackage{enumitem}
\newtheorem{theorem}{Theorem}[section]
\newtheorem{definition}{Definition}[section]
\newtheorem{lemma}{Lemma}[section]
\newtheorem{remark}{Remark}[section]

\newtheorem{example}{Example}[section]
\newtheorem{proposition}{Proposition}[section]

\newtheorem{problem}{Problem}[section]

\usepackage[utf8]{inputenc}
 
\setlength{\arrayrulewidth}{1mm}
\setlength{\tabcolsep}{18pt}

\begin{document}
\title{An unknotting invariant for welded knots}
\author{ K. Kaur} 
\address{Kirandeep Kaur\\ Department of Mathematics\\ Indian Institute of Technology Ropar\\
 Nangal Road, Rupnagar, Punjab 140001, INDIA } 
\email{kirandeep.kaur@iitrpr.ac.in}

\author{ A. Gill} 
\address{Amrendra Gill\\ Department of Mathematics\\ Indian Institute of Technology Ropar\\
 Nangal Road, Rupnagar, Punjab 140001, INDIA } 
\email{amrendra.gill@iitrpr.ac.in}

\author{ M. Prabhakar } 
\address{Madeti Prabhakar  \\Department of Mathematics\\ Indian Institute of Technology Ropar\\
 Nangal Road, Rupnagar, Punjab 140001, INDIA}  
\email{prabhakar@iitrpr.ac.in}

\author{ A. Vesnin} 
\address{Andrei Vesnin\\ Regional Mathematical Center, Tomsk State University, Tomsk, 634050, RUSSIA} 
\email{vesnin@math.nsc.ru}

\begin{abstract}
We study a local twist move on welded knots that is an analog of the virtualization move on virtual knots. Since this move is an unknotting operation we define an invariant, \emph{unknotting twist number}, for welded knots. We relate the unknotting twist number with warping degree and welded unknotting number, and establish a lower bound on the twist number using Alexander quandle coloring. We also study the Gordian distance between welded knots by twist move and define the corresponding Gordian complex.
\end{abstract}
\keywords{Welded knot, unknotting twist number, quandle coloring, Gordian distance}
\makeatletter{\renewcommand*{\@makefnmark}{}
\footnotetext{2010 {\it Mathematics Subject Classifications.} 57M25, 57M27.}\makeatother}

\maketitle

\section{Introduction}
The virtual knots and welded knots are two extensions of classical knots in the 3-sphere. 
Welded knots were first introduced by R.~Fenn, R.~Rim{\'a}nyi, and C.~Rourke in the study of braid-permutation group \cite{fenn1997braid}. Later L.~Kauffman \cite{kauffman1999virtual} established the virtual knot theory which gives an approach  to study knots embedded in certain 3-manifolds. A virtual knot diagram has two types of crossings: classical crossings and virtual crossings as shown in Fig.~\ref{fig1}. 

\begin{figure}[!ht]
\centering 
\unitlength=0.4mm
\begin{picture}(140,30)(-10,5)
\thicklines
\qbezier(-10,10)(-10,10)(10,30)
\qbezier(-10,30)(-10,30)(-2,22) 
\qbezier(10,10)(10,10)(2,18)
\qbezier(70,10)(70,10)(50,30)
\qbezier(70,30)(70,30)(50,10) 
\put(60,20){\circle{4}}
\qbezier(130,10)(130,10)(110,30)
\qbezier(130,30)(130,30)(110,10) 
\put(120,20){\circle*{4}}
\end{picture}
\caption{Classical, virtual and welded crossings.} \label{fig1}
\end{figure}
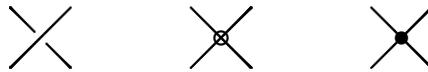

Two virtual knot diagrams represent the same virtual knot if one can be obtained from the other via a finite sequence of generalized Reidemeister moves  which are classical Reidemeister moves (RI, RII, and RIII) or virtual Reidemeister moves (VRI, VRII, VRIII, and SV) shown in Fig.~\ref{rm}, where for further reasons virtual crossings are drawn as welded crossings. 
\begin{figure}[!ht] {\centering
\subfigure[Classical Reidemeister moves.] {\includegraphics[scale=0.3]{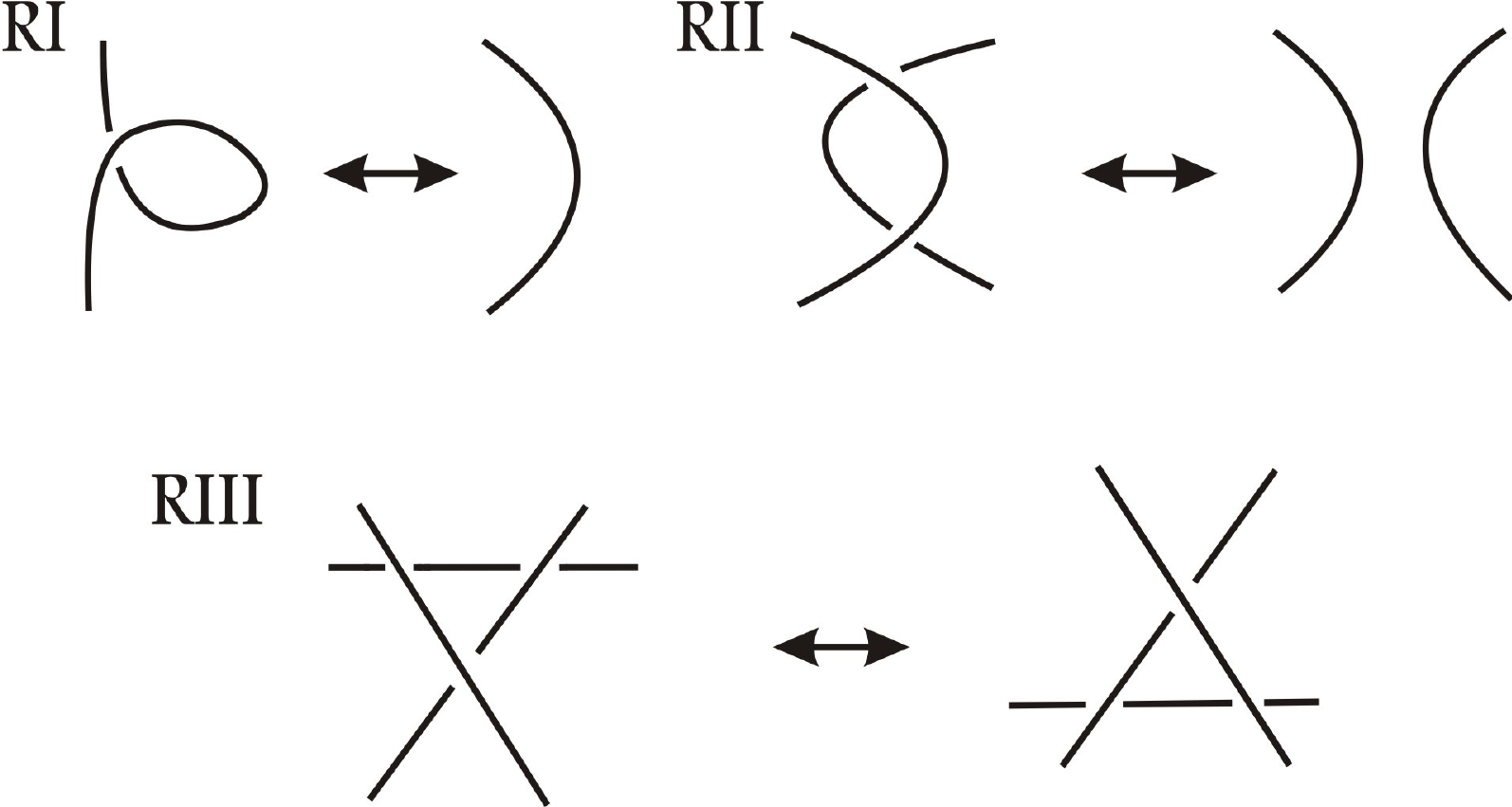}\label{cr}} \qquad 
\subfigure[Virtual Reidemeister moves.]{\includegraphics[scale=0.3]{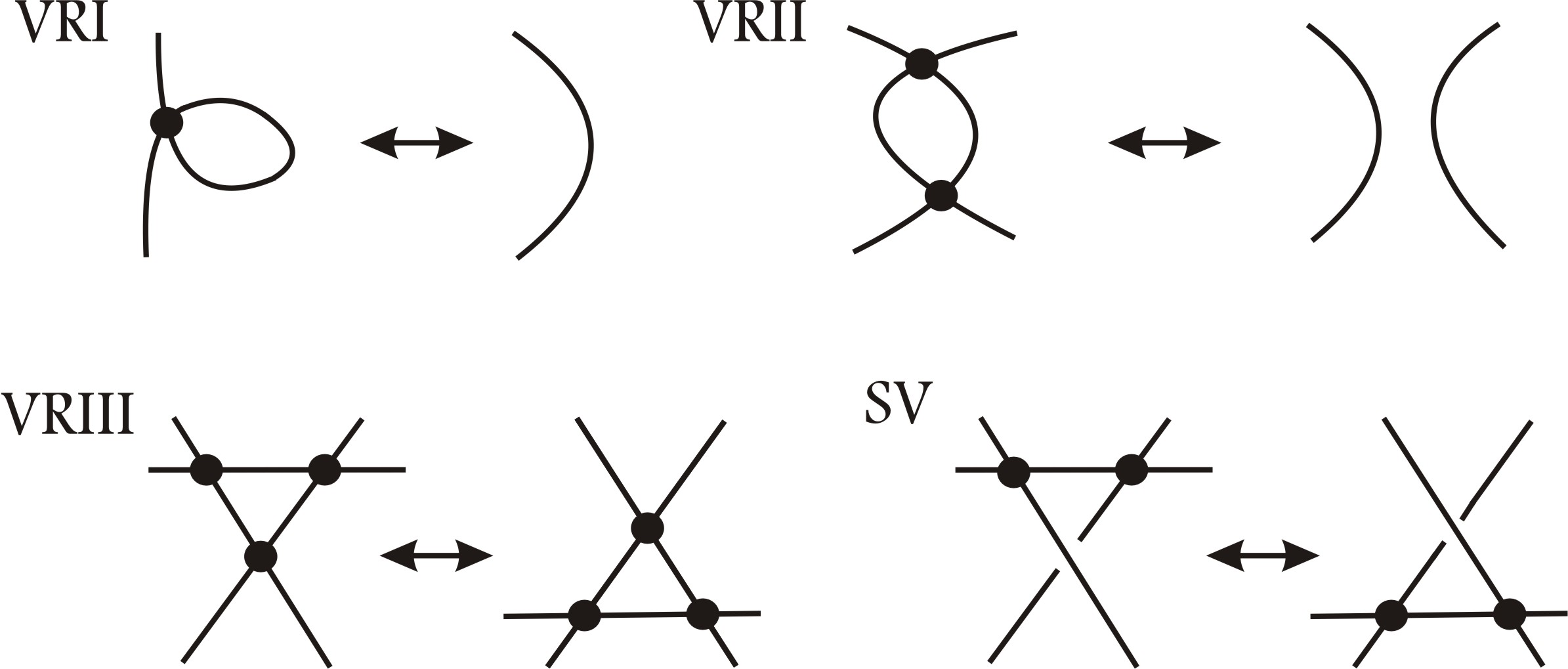} \label{vr} }
\caption{Generalized Reidemeister moves.}\label{rm} }
\end{figure} 

A \emph{detour move} is a  consequence of virtual Reidemeister moves which allows to move a segment of diagram containing only virtual crossings freely in the plane. We will consider a welded version of a detour move applying it to segments of diagrams containing only welded crossings.  While moving such a segment to different position in the plane, mark all the new crossings as welded crossings wherever it cuts across the knot diagram, see Fig.~\ref{DETOUR}. 

\begin{figure}[h] 
\centerline{\includegraphics[width=0.6\linewidth]{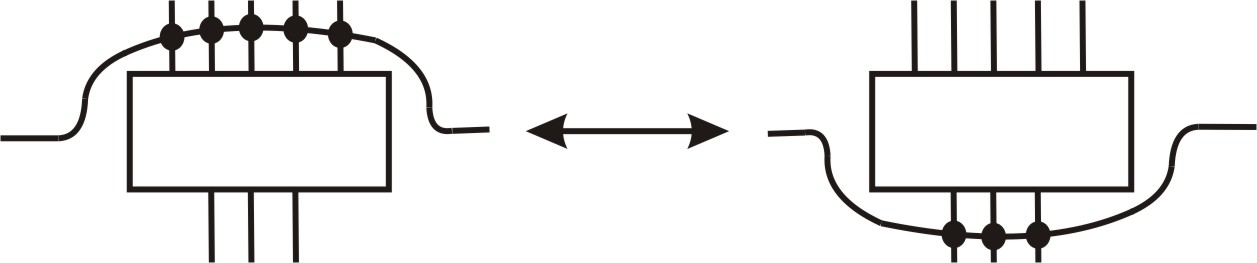}} 
\caption{Detour move.} \label{DETOUR}
\end{figure} 

There are two other diagrammatic moves $F_1$ and $F_2$ shown in Fig.~\ref{frb} not included in  generalized Reidemeister moves. It was shown in~\cite{kanenobu2001forbidden, nelson2001unknotting} that allowing moves $F_{1}$ and $F_{2}$ results in every virtual knot becoming equivalent to trivial knot. For this reason $F_1$ and $F_2$ are called \emph{forbidden moves}.
\begin{figure}[!ht] {\centering
\subfigure[$F_{1}$-move] {\includegraphics[scale=0.35]{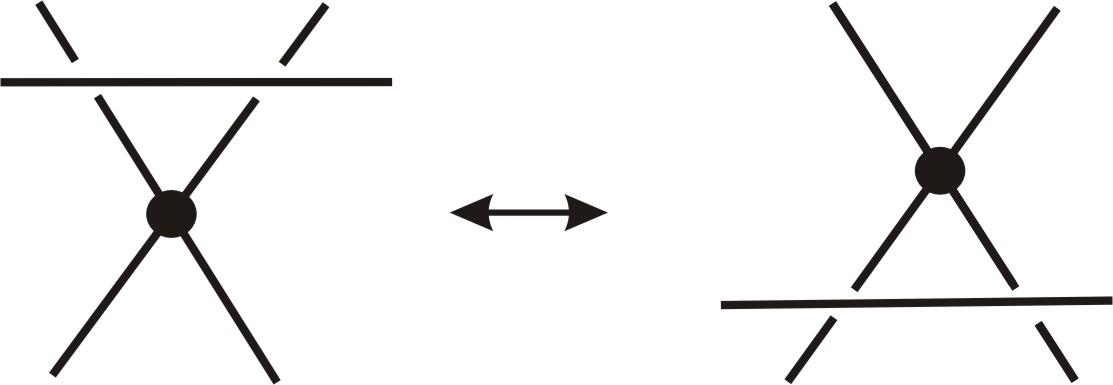}\label{frb1}} \qquad \qquad 
\subfigure[$F_{2}$-move]{\includegraphics[scale=0.35]{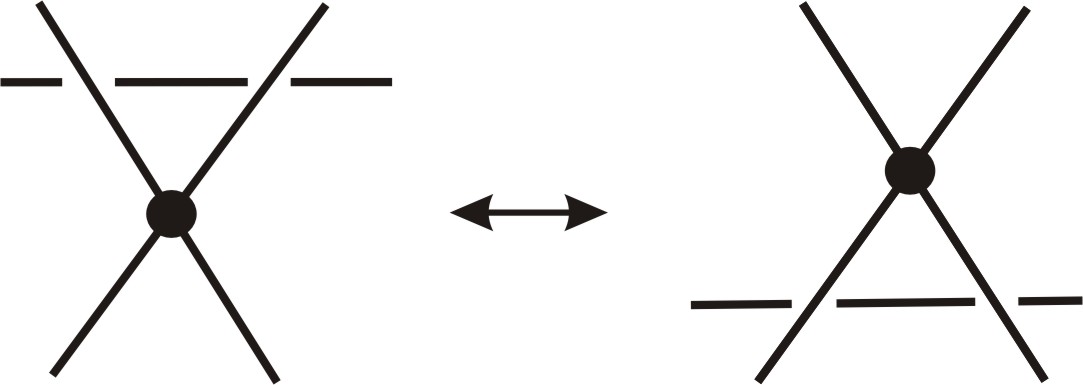} \label{frb2} }
\caption{Forbidden moves.}\label{frb} }\end{figure}
However, allowing only one of $F_1$ and $F_2$ move along with generalized Reidemeister moves results in non-trivial theory distinct from virtual knot theory. Virtual knots modulo forbidden move $F_1$ gives \emph{welded knots} similar to the ones introduced in~\cite{fenn1997braid}. Generalized Reidemeister moves along with $F_1$-move are called \emph{welded Reidemeister moves}. To distinguish the welded diagrams from virtual diagrams we will indicate welded crossings by a small disk as in the right in Fig.~\ref{fig1}. 

In the recent past, many knot invariants like knot group and knot quandle have been extended to virtual knots and welded knots. For example, the virtual version of Jones polynomial is an important polynomial invariant of virtual knots. It is known that Jones polynomial when considered under $F_1$-move is no longer an invariant hence indicating that the welded knot theory is not same as the virtual knot theory and therefore it is interesting to study the differences among them. Local moves play an important role in defining knot invariants, it was shown in \cite{kawauchi2012survey} that local moves like crossing change, $\Delta$-move and $\sharp$-move are unknotting operations for classical knots,  but not for virtual knots. However, in \cite{Sathohwelded} S.~Satoh  proved that crossing change, $\Delta$-move and $\sharp$-move are unknotting operations for welded knots hence remarking the difference of virtual and welded knots. An unknotting index for virtual links was investigated in~\cite{KPV}. 

It is still unknown whether there exists a classical nontrivial knot with unit Jones polynomial. However, examples of nontrivial virtual knots with unit Jones polynomial exists \cite{silver2004class} though none of them is yet known to be a classical knot. A local move called \emph{virtualization} and presented in Fig.~\ref{vc} was used for the construction of such virtual knots with unit Jones polynomial. 
\begin{figure}[!ht]
\centering 
\unitlength=0.4mm
\begin{picture}(140,30)(-10,5)
\thicklines
\qbezier(-20,10)(-20,10)(-10,10)
\qbezier(-20,30)(-20,30)(-10,30)
\qbezier(-10,10)(-10,10)(10,30)
\qbezier(-10,30)(-10,30)(-2,22) 
\qbezier(10,10)(10,10)(2,18)
\qbezier(20,10)(20,10)(10,10)
\qbezier(20,30)(20,30)(10,30)
\put(60,20){\makebox(0,0)[c,c]{$\Longleftrightarrow$}}
\qbezier(110,10)(110,10)(90,30)
\qbezier(110,30)(110,30)(90,10) 
\qbezier(110,10)(110,10)(130,30)
\qbezier(110,30)(110,30)(118,22) 
\qbezier(130,10)(130,10)(122,18)
\qbezier(150,10)(150,10)(130,30)
\qbezier(150,30)(150,30)(130,10) 
\put(100,20){\circle{4}}
\put(140,20){\circle{4}}
\end{picture}
\caption{Virtualization move.} \label{vc}
\end{figure}
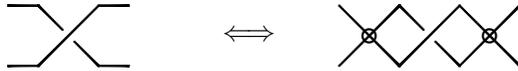
Remark that a virtualization move is not an unknotting operation for virtual knots. This fact follows from the observation by L.~Kauffman \cite{kauffman1999virtual} that bracket polynomial $\langle K \rangle$ and hence the Jones polynomial $f_K(A)$ of a virtual knot $K$ remains unchanged under virtualization move. Hence for a virtual knot $K$ with Jones polynomial $f_K(A) \neq 1$ it is not possible to convert $K$ into trivial knot using a sequence of virtualization moves and generalized Reidemeister moves. 

We study this local move in the category of welded knots establishing it as an unknotting operation and for the terminology call it \emph{twist move}, see Fig.~\ref{tm}.  The \emph{twist number} of a welded knot is defined as the minimal number of twist moves required to make a welded knot trivial. 
\begin{figure}[!ht]
\centering 
\unitlength=0.4mm
\begin{picture}(140,30)(-10,5)
\thicklines
\qbezier(-20,10)(-20,10)(-10,10)
\qbezier(-20,30)(-20,30)(-10,30)
\qbezier(-10,10)(-10,10)(10,30)
\qbezier(-10,30)(-10,30)(-2,22) 
\qbezier(10,10)(10,10)(2,18)
\qbezier(20,10)(20,10)(10,10)
\qbezier(20,30)(20,30)(10,30)
\put(60,20){\makebox(0,0)[c,c]{$\Longleftrightarrow$}}
\qbezier(110,10)(110,10)(90,30)
\qbezier(110,30)(110,30)(90,10) 
\qbezier(110,10)(110,10)(130,30)
\qbezier(110,30)(110,30)(118,22) 
\qbezier(130,10)(130,10)(122,18)
\qbezier(150,10)(150,10)(130,30)
\qbezier(150,30)(150,30)(130,10) 
\put(100,20){\circle*{4}}
\put(140,20){\circle*{4}}
\end{picture}
\caption{Twist move.} \label{tm}
\end{figure}
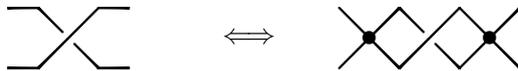
We observe that virtualization move fails to be unknotting operation for virtual knots. Further we study the relation of warping degree and welded unknotting number of a welded knot with the twist number. A lower bound on twist number is also established using quandle colorings. In the end we also discuss Gordian distance of welded knots by twist move and give some related results.  

The paper is organized as follows. In Section~2, we recall descending welded knot diagram, warping degree, quandle coloring and define welded unknotting number for welded knots. In Section~3, we prove that the twist move is an unknotting operation for welded knots, see Theorem~\ref{thm1}, and demonstrate lower and upper bounds of the unknotting twist number by welded unknotting number and warping degree, see Theorem~\ref{thm2}. In Section~4 we demonstrate that there is an infinite family of 2-bridge knots with unknotting twist number one, see Proposition~\ref{prop4.1}. 
In Section~5, we provide another lower bound on the twist number using Alexander quandle coloring. In the last Section~6 we consider the distance between welded knots by twist move, introduce corresponding  Gordian complex and study its properties, see Theorem~\ref{thm6.1}. 

\section{Descending diagrams and warping degree}

In this section, we recall the definition of descending welded knot diagram referring to~\cite{Sathohwelded} and warping degree for welded knots referring to~\cite{unknottingweldedLi2017}. 

We say that two welded knot diagrams $D$ and $D'$ are \emph{equivalent} if there exists a finite sequence of welded diagrams $D_{1} = D, D_{2}, \ldots, D_{n} = D'$ such that $D_{i+1}$ is obtained from $D_{i}$ by applying generalized Reidemeister moves or $F_{1}$, $i = 1, 2, \ldots, n-1$. A \emph{welded knot} is an equivalence class of welded knot diagrams. A welded knot is said to be \emph{trivial} if it is presented by a trivial diagram. It is easy to see~\cite[Lemma~2.3]{Sathohwelded} that any non-trivial welded knot $K$ has at least $3$ classical crossings in its diagram.  
A welded knot diagram $D$ is said to be \emph{descending} if there is a base point and an orientation of $D$ such
that walking along $D$ from the base point with respect to the orientation, we meet
the overcrossing for the first time and the undercrossing for the second time at
every classical crossing. Descending diagrams have the following important property.  

\begin{lemma}\cite[Prop.~2.2]{Sathohwelded} \label{l1}
Any descending diagram $D$ is related to the trivial diagram by 
a finite sequence of welded Reidemeister moves RI, VRI--VRIII, SV,  and  $F_1$.
\end{lemma}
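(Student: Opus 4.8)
The plan is to pass to Gauss diagrams, where the welded moves become elementary combinatorial operations, and to run an induction on the number of classical crossings. Recall that a welded knot diagram $D$ is encoded by a Gauss diagram: a base-pointed oriented circle carrying one signed, oriented chord (arrow) for each classical crossing, the arrow running from the over-passage to the under-passage. The virtual moves VRI--VRIII and SV (equivalently, the detour move) do not alter the Gauss diagram at all; an RI move corresponds to inserting or deleting an \emph{isolated} arrow (one whose two endpoints are adjacent on the circle, with nothing between them); and the forbidden move $F_1$ corresponds to interchanging two adjacent endpoints of the \emph{same} type, i.e.\ swapping two neighbouring arrowheads (the argument below is symmetric if $F_1$ is instead the tail-swap). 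Cutting the circle at the base point and laying the endpoints on an interval $[0,1]$, the descending hypothesis says precisely that every arrow has its tail to the left of its head; I will call such a Gauss diagram \emph{rightward}.

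First I would treat the base case of $n=0$ classical crossings: then $D$ has only welded crossings, and a sequence of detour moves (compositions of VRI--VRIII and SV) straightens it to the trivial diagram. For the inductive step, suppose $D$ is rightward with $n\ge 1$ arrows. Among all arrows choose the one, say $A$, whose tail $t_A$ is rightmost on the interval. Since $A$ points rightward, its head $h_A$ lies to the right of $t_A$; and since $t_A$ is the rightmost tail, every endpoint to the right of $t_A$---in particular every endpoint strictly between $t_A$ and $h_A$---is an arrowhead. Hence, applying $F_1$ repeatedly to swap $h_A$ leftward past these neighbouring heads, I can bring $h_A$ immediately to the right of $t_A$, making $A$ isolated. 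A single RI move then deletes $A$, lowering the classical crossing number to $n-1$.

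Crucially, none of these operations disturbs the rightward property of the surviving arrows: swapping two heads leaves every tail fixed and reverses no arrow, and deleting $A$ only removes endpoints. Thus the reduced diagram is again rightward with one fewer crossing, the induction closes, and $D$ is shown equivalent to the trivial diagram through a finite sequence of RI, VRI--VRIII, SV and $F_1$ moves (together with the realization and detour moves needed to pass between a Gauss diagram and an actual welded diagram). The step I expect to require the most care is pinning down the exact translation of the diagrammatic $F_1$ move into Gauss-diagram language and verifying that it is a same-type (head or tail) endpoint swap rather than a mixed head--tail swap: this is exactly what makes the single forbidden move $F_1$ suffice here, whereas a general non-descending diagram, whose arrows point in both directions, cannot be trimmed this way, since its rightmost-tail arrow may point leftward and resist isolation. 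One should also confirm that the two RI variants allow deletion of an isolated arrow of either sign, so that the chord signs never obstruct the reduction.
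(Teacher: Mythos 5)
Your proof is correct, and it is worth noting that the paper itself offers no argument for this lemma at all: it is quoted verbatim from Satoh's paper (Prop.~2.2 of the cited reference), so your Gauss-diagram induction is supplying exactly the content that the citation hides. The structure is sound: virtual moves leave the Gauss diagram untouched (detour move), RI inserts or deletes an isolated arrow, $F_1$ exchanges two adjacent endpoints of the same type, and a descending diagram is precisely one whose arrows all point ``rightward'' from the base point; then an extremal-arrow induction peels off one crossing at a time while preserving rightwardness. The one point you correctly flag as delicate is the only place where a reader must be careful: with your stated arrow convention (arrows run from over-passage to under-passage, so tails are over-passages), the welded-allowed move $F_1$ is the \emph{overcrossings-commute} move, i.e.\ the \emph{tail} swap, not the head swap written out in your main text. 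So it is your parenthetical ``symmetric'' variant that actually applies: one chooses the arrow whose \emph{head} is leftmost, notes that every endpoint between its tail and head must then be a tail, and pushes the tail rightward by $F_1$ until the arrow is isolated; this variant does preserve rightwardness (the displaced tails move left, away from their heads), so the induction closes just as you describe. Since you state the symmetry and it genuinely holds, this is a matter of pinning down a convention rather than a gap, but in a final write-up you should commit to one convention and write the matching case, rather than leave the reader to dualize the argument.
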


In~\cite{a.shimizuwarping}, A. Shimizu introduced the concept of warping degree for classical knots. In~\cite{unknottingweldedLi2017}, Z.~Li, F.~Lei and J.~Wu extended the concept of warping degree to welded knots.   Let $D$ be an oriented welded knot diagram of a welded knot $K$ and $a$ be a non-crossing point on $D$.  Diagram $D_a$ is called \emph{based diagram} with based point $a$.  
The \emph{warping degree} of $D_a$, denoted by $d(D_a )$, is the number of classical crossings encounter first at under crossing point while starting from $a$ and traverse along the orientation of $D_a$.
The \emph{warping degree} of $D$, denoted by $d(D)$, is the minimal warping degree for all base points of $D$.
Let $-D$ be the inverse of $D$ obtained by reversing the orientation. Then \emph{warping degree of a knot} $K$ is defined as
$$
d(K) = \min \{d(D), d(-D) ~| ~D \in[K]\}.
$$

Further, switching a classical crossing to virtual crossing is an unknotting operation for virtual knots. The virtual unknotting number $u_v(K)$ is the minimum number of classical crossings needed to make virtual crossings to transform $K$ into the unknot. Similarly, the welded unknotting number can be defined for welded knots. Since a welded knot diagram with all welded crossings is a diagram of the trivial welded knot, virtual unknotting number is well defined for welded knots. 

\begin{definition}
{\rm 
\emph{The welded unknotting number $u_w (D)$ of a diagram} $D$ is the minimum number of classical crossings that must be changes to welded crossings so that the resulting diagram is a diagram of the trivial welded knot. \emph{The welded unknotting number $u_w (K)$ of a welded knot $K$} is defined as 
$$
u_w (K)= \min\{u_w (D) \, |  \, D \in [K]\}.
$$
}
\end{definition} 

Since $u_w (K)$ is minimum over all the diagrams presenting $K$, $u_w (K)$ is a welded knot invariant. Below we will use $u_w (K)$ to provide a lower bound on the twist number.
 \section{Twist number for welded knots}
 
 Let $D$ be a welded knot diagram and $c$ be a classical  crossing in $D$. Twist move is a local transformation which changes under crossing point and over crossing point of the crossing $c$ with each other as shown in Fig.~\ref{wc}, where $T_{1}$-move and $T_{2}$-move are indicated.  While performing this local transformation, sign of the crossing remains same, but two new welded crossings appear.  
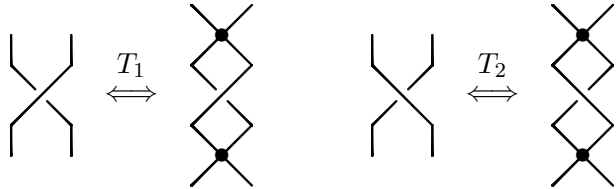
\begin{figure}[!ht]
\centering 
\unitlength=0.4mm
\begin{picture}(0,60)(0,-10)
\thicklines
\qbezier(-100,0)(-100,0)(-100,10)
\qbezier(-80,0)(-80,0)(-80,10)
\qbezier(-100,10)(-100,10)(-80,30)
\qbezier(-100,30)(-100,30)(-92,22) 
\qbezier(-80,10)(-80,10)(-88,18)
\qbezier(-100,30)(-100,30)(-100,40)
\qbezier(-80,30)(-80,30)(-80,40)
\put(-60,20){\makebox(0,0)[c,c]{$\Longleftrightarrow$}}
\put(-60,30){\makebox(0,0)[c,c]{$T_{1}$}}
\qbezier(-40,-10)(-40,-10)(-20,10)
\qbezier(-40,10)(-40,10)(-20,-10) 
\put(-30,0){\circle*{4}}
\qbezier(-40,10)(-40,10)(-20,30)
\qbezier(-40,30)(-40,30)(-32,22) 
\qbezier(-20,10)(-20,10)(-28,18)
\qbezier(-40,30)(-40,30)(-20,50)
\qbezier(-40,50)(-40,50)(-20,30) 
\put(-30,40){\circle*{4}}
\qbezier(20,0)(20,0)(20,10)
\qbezier(40,0)(40,0)(40,10)
\qbezier(20,30)(20,30)(40,10)
\qbezier(20,10)(20,10)(28,18) 
\qbezier(40,30)(40,30)(32,22)
\qbezier(20,30)(20,30)(20,40)
\qbezier(40,30)(40,30)(40,40)
\put(60,20){\makebox(0,0)[c,c]{$\Longleftrightarrow$}}
\put(60,30){\makebox(0,0)[c,c]{$T_{2}$}}
\qbezier(80,-10)(80,-10)(100,10)
\qbezier(80,10)(80,10)(100,-10) 
\put(90,0){\circle*{4}}
\qbezier(80,30)(80,30)(100,10)
\qbezier(80,10)(80,10)(88,18) 
\qbezier(100,30)(100,30)(92,22)
\qbezier(80,30)(80,30)(100,50)
\qbezier(80,50)(80,50)(100,30) 
\put(90,40){\circle*{4}}
\end{picture}
\caption{Twist moves.} \label{wc}
\end{figure}

Denote by $C(D)$ the set of all classical crossings of a welded knot diagram~$D$. 

\begin{proposition}
If $D'$ is a diagram obtained from a welded knot diagram $D$ by applying a twist move twice at the same crossing $c \in C(D)$, then $D'$ is equivalent to $D$.  
\end{proposition}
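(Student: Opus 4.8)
The plan is to track the local picture inside the small disk where the twist move is performed, and to show that two applications return it, up to welded Reidemeister moves, to the original classical crossing. First I would record the effect of a single twist move in the form depicted in Fig.~\ref{wc}: it replaces the classical crossing $c$ by a vertical stack consisting of a welded crossing, a classical crossing with its over/under data interchanged, and a second welded crossing, while the four arc-ends meeting the boundary of the disk (hence the connectivity with the rest of $D$) are left unchanged. I would note explicitly that the two new welded crossings are created one on each side of $c$ along the two strands.

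Next I would apply the twist move a second time to the classical crossing sitting in the middle of this stack. Since that crossing is again a classical crossing inside a small disk, the move replaces it by its own welded--classical--welded stack. Writing the crossings of the full configuration from one end to the other, one obtains a stack $w_1, w_3, c'', w_4, w_2$, where $w_1, w_2$ come from the first move, $w_3, w_4$ from the second, and $c''$ is the classical crossing whose over/under data has now been interchanged twice and is therefore identical to that of the original crossing $c$. (After the first move $c$ has become a crossing of the complementary type, so the second application is the other of $T_1, T_2$; the bookkeeping is uniform in either case.)

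I would then simplify. The crossings $w_1$ and $w_3$ are two consecutive welded crossings on the same pair of strands with no crossing between them, so they bound a welded bigon and are removed by a VRII move; the same holds for the pair $w_4, w_2$. Cancelling both pairs leaves exactly the original classical crossing $c$ with its original over/under information. Since every move used (the two twist moves together with the two VRII moves) is supported inside the disk and leaves the remainder of the diagram untouched, the resulting diagram $D'$ is related to $D$ by welded Reidemeister moves, which gives the desired equivalence $D' \sim D$.

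The step I expect to require the most care is the bookkeeping of the over/under data and of the positions of the four welded crossings: I must check that the two over/under interchanges genuinely compose to the identity (so that $c''$ really equals $c$) and that the welded crossings pair up as adjacent bigons $\{w_1, w_3\}$ and $\{w_4, w_2\}$ rather than being separated by $c''$, since only then does VRII apply. Verifying this amounts to tracing the two strands through the five-crossing stack, and it is the only genuinely non-formal part of the argument.
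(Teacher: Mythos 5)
Your argument, as written, treats only one of the two cases that the proposition requires, and in that case it coincides with the paper's own proof: when the second twist move inserts its pair of welded crossings on the same pair of strands as the first, one gets the linear five-crossing stack $w_1, w_3, c'', w_4, w_2$ of the paper's Fig.~\ref{dcc1}, the two adjacent welded pairs cancel by VRII, and the middle classical crossing agrees with the original $c$. (One bookkeeping remark: with the conventions of Fig.~\ref{wc}, the classical crossing in the middle of the stack has the \emph{same} local type as $c$ --- the exchange of over/under information for the strands of the diagram is produced by the two welded crossings, not by flipping the picture of the crossing --- so in Fig.~\ref{dcc1} the same move $T_1$ is applied twice; your parenthetical claim that the second application must be the complementary move is not what the figures show, though it does not affect this case.)

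The genuine gap is that a twist move at a given crossing can be performed in two inequivalent ways: the new welded pair can be inserted on either pair of opposite legs of the crossing, since the local move may be applied after rotating the disk by a quarter turn. Your proof tacitly assumes the second move stacks its welded crossings in line with the first, which is exactly what makes the five crossings collinear and the VRII cancellations available. If instead the second move inserts its welded pair on the other two legs --- the case the paper treats in Fig.~\ref{dcc2} --- the resulting configuration has the classical crossing surrounded by welded crossings below, above, left and right; no two of the welded crossings bound a bigon, so VRII alone cannot remove them. The paper disposes of this case by a detour-type simplification using VRI, VRII, VRIII and SV. Without an argument for this second configuration (either reproducing the paper's sequence of moves, or showing that the two ways of performing a twist move at a crossing yield welded-equivalent results), your case analysis is incomplete and the proposition is not fully proved.
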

 
 \begin{proof}
Operate a twist move $T_1$ twice at $c$ as in Fig.~\ref{dcc1}, or a twist move $T_2$ twice at $c$ if it has another crossing type. 
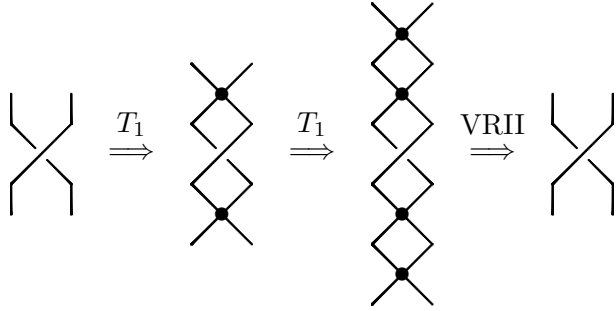
\begin{figure}[!ht]
\centering 
\unitlength=0.4mm
\begin{picture}(0,100)(0,-30)
\thicklines
\qbezier(-100,0)(-100,0)(-100,10)
\qbezier(-80,0)(-80,0)(-80,10)
\qbezier(-100,10)(-100,10)(-80,30)
\qbezier(-100,30)(-100,30)(-92,22) 
\qbezier(-80,10)(-80,10)(-88,18)
\qbezier(-100,30)(-100,30)(-100,40)
\qbezier(-80,30)(-80,30)(-80,40)
\put(-60,20){\makebox(0,0)[c,c]{$\Longrightarrow$}}
\put(-60,30){\makebox(0,0)[c,c]{$T_{1}$}}
\qbezier(-40,-10)(-40,-10)(-20,10)
\qbezier(-40,10)(-40,10)(-20,-10) 
\put(-30,0){\circle*{4}}
\qbezier(-40,10)(-40,10)(-20,30)
\qbezier(-40,30)(-40,30)(-32,22) 
\qbezier(-20,10)(-20,10)(-28,18)
\qbezier(-40,30)(-40,30)(-20,50)
\qbezier(-40,50)(-40,50)(-20,30) 
\put(-30,40){\circle*{4}}
\put(0,20){\makebox(0,0)[c,c]{$\Longrightarrow$}}
\put(0,30){\makebox(0,0)[c,c]{$T_{1}$}}
\qbezier(20,-30)(20,-30)(40,-10)
\qbezier(20,-10)(20,-10)(40,-30)
\put(30,-20){\circle*{4}}
\qbezier(20,-10)(20,-10)(40,10)
\qbezier(20,10)(20,10)(40,-10)
\put(30,0){\circle*{4}}
\qbezier(20,10)(20,10)(40,30)
\qbezier(20,30)(20,30)(28,22) 
\qbezier(40,10)(40,10)(32,18)
\qbezier(20,30)(20,30)(40,50)
\qbezier(20,50)(20,50)(40,30)
\put(30,40){\circle*{4}}
\qbezier(20,50)(20,50)(40,70)
\qbezier(20,70)(20,70)(40,50)
\put(30,60){\circle*{4}}
\put(60,20){\makebox(0,0)[c,c]{$\Longrightarrow$}}
\put(60,30){\makebox(0,0)[c,c]{VRII}}
\qbezier(80,10)(80,10)(100,30)
\qbezier(80,30)(80,30)(88,22) 
\qbezier(100,10)(100,10)(92,18)
\qbezier(80,0)(80,0)(80,10) 
\qbezier(100,0)(100,0)(100,10)
\qbezier(80,30)(80,30)(80,40) 
\qbezier(100,30)(100,30)(100,40)
\end{picture}
\caption{Performing twist move $T_{1}$ twice.} \label{dcc1}
\end{figure}
Also, operate twist moves $T_1$ and $T_2$ as in Fig.~\ref{dcc2} or $T_2$ and $T_1$ is the case of another crossing type at $c$.  Let $D'$ be the resulting diagram obtained from $D$ by performing twist moves twice at crossing $c$. 
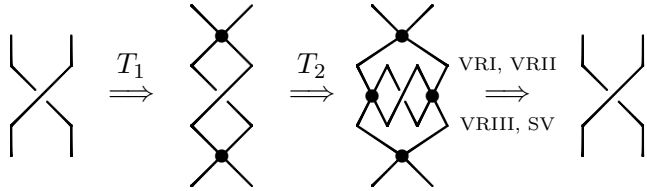
\begin{figure}[!ht]
\centering 
\unitlength=0.4mm
\begin{picture}(0,60)(0,-10)
\thicklines
\qbezier(-100,0)(-100,0)(-100,10)
\qbezier(-80,0)(-80,0)(-80,10)
\qbezier(-100,10)(-100,10)(-80,30)
\qbezier(-100,30)(-100,30)(-92,22) 
\qbezier(-80,10)(-80,10)(-88,18)
\qbezier(-100,30)(-100,30)(-100,40)
\qbezier(-80,30)(-80,30)(-80,40)
\put(-60,20){\makebox(0,0)[c,c]{$\Longrightarrow$}}
\put(-60,30){\makebox(0,0)[c,c]{$T_{1}$}}
\qbezier(-40,-10)(-40,-10)(-20,10)
\qbezier(-40,10)(-40,10)(-20,-10) 
\put(-30,0){\circle*{4}}
\qbezier(-40,10)(-40,10)(-20,30)
\qbezier(-40,30)(-40,30)(-32,22) 
\qbezier(-20,10)(-20,10)(-28,18)
\qbezier(-40,30)(-40,30)(-20,50)
\qbezier(-40,50)(-40,50)(-20,30) 
\put(-30,40){\circle*{4}}
\put(0,20){\makebox(0,0)[c,c]{$\Longrightarrow$}}
\put(0,30){\makebox(0,0)[c,c]{$T_{2}$}}
\qbezier(20,-10)(20,-10)(30,0)
\qbezier(30,0)(30,0)(40,-10)
\qbezier(15,10)(30,0)(30,0)
\qbezier(45,10)(30,0)(30,0)
\put(30,0){\circle*{4}}
\qbezier(15,10)(15,10)(25,30)
\qbezier(15,30)(15,30)(25,10)
\put(20,20){\circle*{4}}
\qbezier(25,10)(25,10)(35,30)
\qbezier(25,30)(25,30)(29,22) 
\qbezier(35,10)(35,10)(31,18)
\qbezier(35,10)(35,10)(45,30)
\qbezier(35,30)(35,30)(45,10)
\put(40,20){\circle*{4}}
\qbezier(20,50)(20,50)(30,40)
\qbezier(40,50)(40,50)(30,40)
\qbezier(15,30)(15,30)(30,40)
\qbezier(45,30)(45,30)(30,40)
\put(30,40){\circle*{4}}
\put(65,20){\makebox(0,0)[c,c]{$\Longrightarrow$}}
\put(65,30){\makebox(0,0)[c,c]{\tiny{VRI, VRII}}} 
\put(65,10){\makebox(0,0)[c,c]{\tiny{VRIII, SV}}}
\qbezier(90,10)(90,10)(110,30)
\qbezier(90,30)(90,30)(98,22) 
\qbezier(110,10)(110,10)(102,18)
\qbezier(90,0)(90,0)(90,10) 
\qbezier(110,0)(110,0)(110,10)
\qbezier(90,30)(90,30)(90,40) 
\qbezier(110,30)(110,30)(110,40)
\end{picture}
\caption{Performing twist moves $T_{1}$ and $T_{2}$.} \label{dcc2}
\end{figure}
It is shown in Fig.~\ref{dcc1} and Fig.~\ref{dcc2} that $D'$ is equivalent to $D$ by welded Reidemeister moves.  Analogous equivalence hold in other cases. 
\end{proof}

\begin{theorem} \label{thm1}
 Twist move is an unknotting operation for welded knots.
 \end{theorem}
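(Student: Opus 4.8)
The plan is to reduce an arbitrary welded knot diagram to a descending one using twist moves, and then invoke Lemma~\ref{l1}. Let $K$ be a welded knot with diagram $D$. Fix a base point $a$ on $D$ and an orientation, and begin traversing $D$ from $a$. At each classical crossing $c \in C(D)$ we encounter one of the two strands first: either the overcrossing (in which case $c$ is already ``descending'' with respect to $a$) or the undercrossing. The crossings of the second kind are precisely the ones counted by the warping degree $d(D_a)$. My strategy is to eliminate every crossing of the second kind by a single twist move.

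First I would record the key local property of the twist move, read off directly from Fig.~\ref{tm} and Fig.~\ref{wc}: applying a $T_1$- or $T_2$-move at a classical crossing $c$ interchanges the over- and under-passing strands at $c$, while introducing only two welded crossings and creating no new classical crossing. In particular, the move is supported in a small disk around $c$ and leaves the over/under assignment at every other classical crossing unchanged. Consequently, if at $c$ we had met the undercrossing first, then after the twist move we meet the overcrossing first there, and the status of all remaining crossings is untouched.

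Next I would apply, one by one, a twist move at each classical crossing where the traversal from $a$ met the undercrossing first. Since the moves are mutually non-interfering by the locality observation above, after $d(D_a)$ twist moves we arrive at a diagram $D'$ in which every classical crossing is met over-first along the chosen orientation starting from $a$; that is, $D'$ is a descending diagram. By Lemma~\ref{l1}, $D'$ is related to the trivial diagram by a finite sequence of welded Reidemeister moves RI, VRI--VRIII, SV, and $F_1$. Combining this with the finitely many twist moves already performed exhibits a sequence of twist moves (together with welded Reidemeister moves) carrying $D$ to the trivial diagram, proving that the twist move is an unknotting operation.

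The step I expect to require the most care is the locality claim, namely that a twist move modifies the over/under data at exactly one classical crossing and manufactures no additional classical crossings; everything else then follows formally. This argument also makes transparent the upper bound $t(K) \le d(D_a)$ anticipated in Theorem~\ref{thm2}, since the number of twist moves used equals the warping degree of the based diagram $D_a$.
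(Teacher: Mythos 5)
Your proof is correct, but it follows a different route from the paper's. The paper proves Theorem~\ref{thm1} by simulation of the second forbidden move: since allowing both $F_1$ and $F_2$ trivializes every virtual (hence welded) knot \cite{kanenobu2001forbidden, nelson2001unknotting}, and $F_1$ is already a welded Reidemeister move, it suffices to show that $F_2$ can be realized by twist moves together with welded Reidemeister moves, which the paper does diagrammatically in Fig.~\ref{figF2}. You instead use the twist move's defining property (it exchanges the over- and under-strand at a single classical crossing, producing only welded crossings) to flip every crossing met under-first along a traversal, obtaining a descending diagram, and then invoke Lemma~\ref{l1}. Your locality claim is sound: the inserted welded crossings are irrelevant to descendingness, and the over/under data and traversal order at all other classical crossings are untouched, so the $d(D_a)$ moves do not interfere. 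Notably, your argument is exactly the one the paper itself uses later, in the proof of Theorem~\ref{thm2}, to establish the upper bound $ut(K)\leq d(K)$; so your proof in effect merges Theorem~\ref{thm1} with that half of Theorem~\ref{thm2}, and it buys an immediate quantitative bound (at most $d(D_a)$ twist moves suffice) from just Satoh's lemma on descending diagrams. The paper's route buys something different and reusable: it shows that twist moves together with welded moves generate $F_2$, i.e., the quotient of welded theory by the twist move coincides with the quotient by both forbidden moves, which is a structural statement independent of any particular diagram traversal.
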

 
 \begin{proof} Any welded knot diagram can be turned into trivial welded knot diagram by using generalized Reidemeister moves along with both forbidden moves $F_{1}$ and $F_{2}$. Recall that $F_{1}$ is one of welded Reidemeister moves. 
 Now we will show that forbidden move $F_2$ can be realized using twist moves along with welded Reidemeister moves.
Indeed, we demonstrate in Fig.~\ref{figF2} that $F_2$-move as shown in Fig.~\ref{frb2} can be realized by twist moves along with welded Reidemeister moves.
\end{proof}

 \begin{figure}[ht]{\vspace{-.1cm}
 \centering
\subfigure{\includegraphics[scale=0.51]{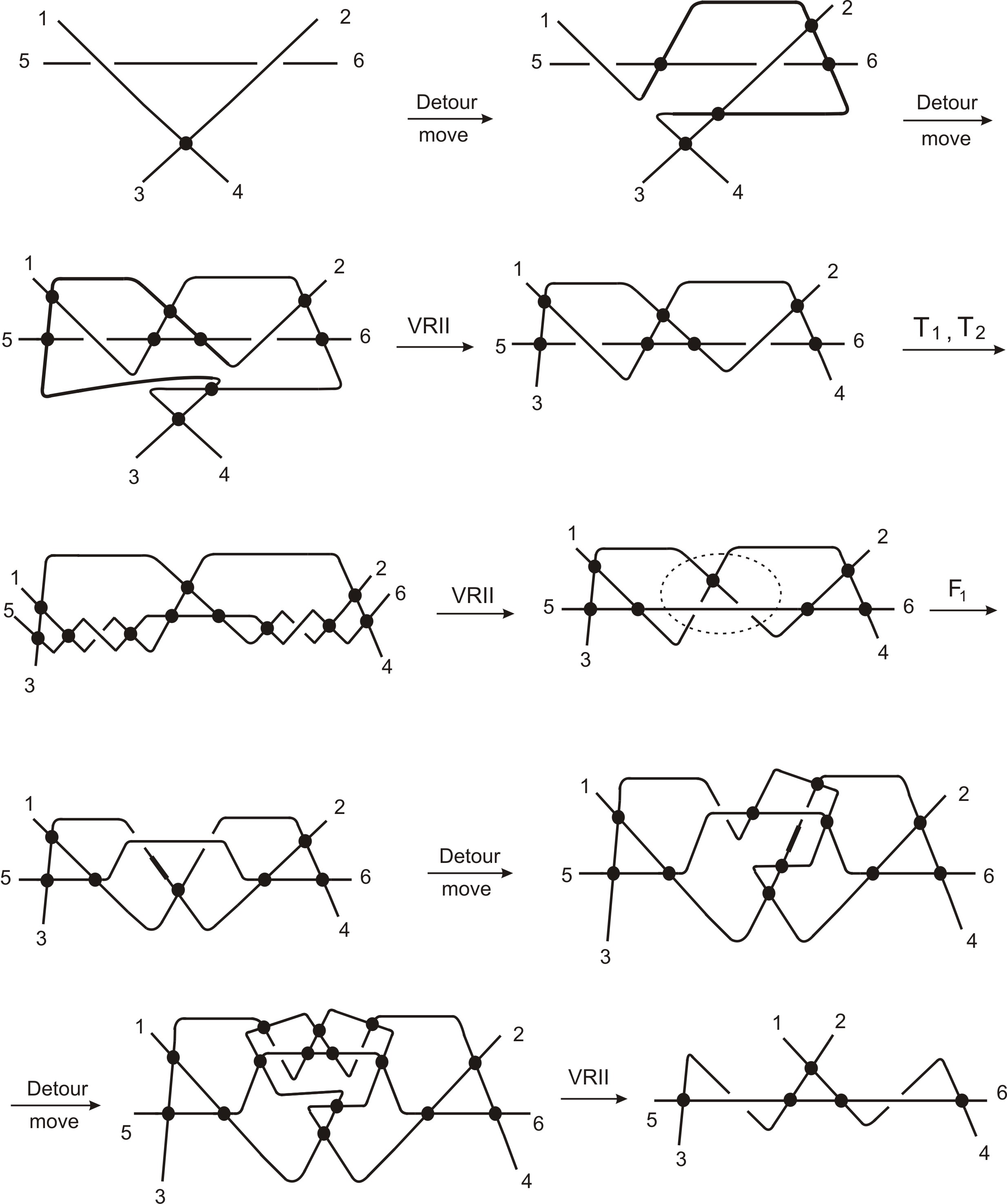}} 
 \subfigure{\includegraphics[scale=0.51]{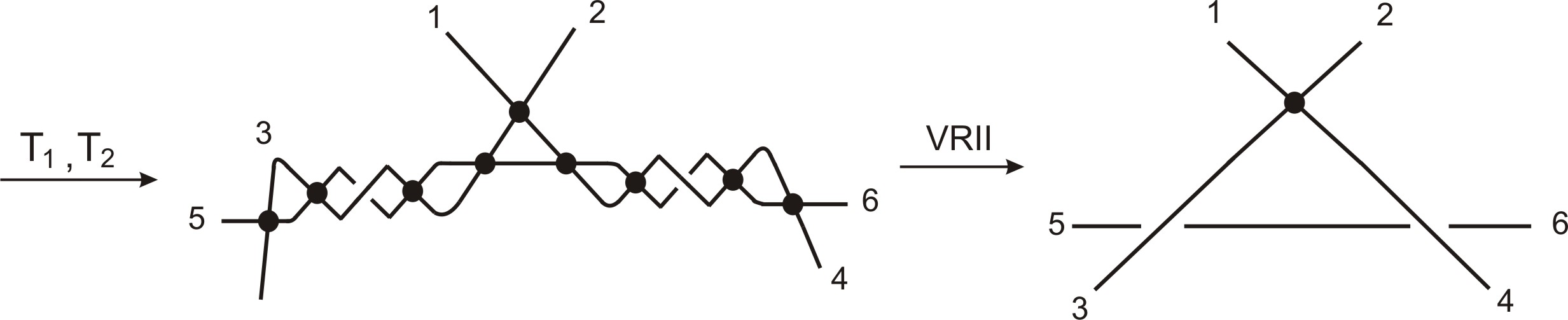} }
\caption{Realization of $F_2$-move.}\label{figF2} }
\end{figure} 

Using twist moves, we define an invariant for welded knot. 
Let $D$ be a welded knot diagram. We define \emph{unknotting twist number} $ut(D)$ of $D$ as the minimum number of twist moves that are required to change $D$ into a diagram of the trivial welded knot. 

\begin{definition}
{\rm 
\emph{The unknotting twist number} $ut(K)$ of a welded knot $K$ is defined as the minimum number of twist moves required, taken over all welded knot diagrams representing $K$, to convert $K$ into the trivial welded knot, {\it i.e,}
$$
ut(K)= \min\{ut(D)~|~D\in[K]\}.
$$
}
 \end{definition}
 
Since $ut(K)$ is minimum over all diagrams presenting $K$, it is a welded knot invariant. If $K$ is a trivial welded knot, then $ut(K)=0$. Otherwise, $ut(K)\geq 1$. 
 
\begin{example}\label{exmpl1} 
{\rm 
Let $B_{1}$ be a welded knot presented by a diagram in Fig.~\ref{utk}.  
\begin{figure}[!ht]
\centering 
\unitlength=0.4mm
\begin{picture}(0,140)(-20,-70)
\thicklines
\qbezier(-130,-50)(-130,-50)(-110,-30)
\qbezier(-110,-50)(-110,-50)(-130,-30) 
\put(-120,-40){\circle*{4}}
\qbezier(-130,-30)(-130,-30)(-110,-10)
\qbezier(-130,-10)(-130,-10)(-122,-18) 
\qbezier(-110,-30)(-110,-30)(-118,-22)
\qbezier(-130,10)(-130,10)(-110,-10)
\qbezier(-130,-10)(-130,-10)(-110,10) 
\put(-120,0){\circle*{4}}
\qbezier(-130,30)(-130,30)(-110,10)
\qbezier(-130,10)(-130,10)(-122,18) 
\qbezier(-110,30)(-110,30)(-118,22)
\qbezier(-130,50)(-130,50)(-110,30)
\qbezier(-130,30)(-130,30)(-122,38) 
\qbezier(-110,50)(-110,50)(-118,42)
\qbezier(-130,50)(-135,50)(-135,45)
\qbezier(-135,45)(-135,45)(-135,-45)
\qbezier(-135,-45)(-135,-50)(-130,-50)
\qbezier(-110,50)(-105,50)(-105,45)
\qbezier(-105,45)(-105,45)(-105,-45)
\qbezier(-105,-45)(-105,-50)(-110,-50)
\put(-120,-60){\makebox(0,0)[c,c]{$B_{1}$}}
\put(-90,0){\makebox(0,0)[c,c]{$\Longrightarrow$}}
\put(-90,10){\makebox(0,0)[c,c]{$T_{2}$}}
\qbezier(-70,-70)(-70,-70)(-50,-50)
\qbezier(-70,-50)(-70,-50)(-50,-70) 
\put(-60,-60){\circle*{4}}
\qbezier(-70,-50)(-70,-50)(-50,-30)
\qbezier(-70,-30)(-70,-30)(-62,-38) 
\qbezier(-50,-50)(-50,-50)(-58,-42) 
\qbezier(-70,-30)(-70,-30)(-50,-10)
\qbezier(-70,-10)(-70,-10)(-50,-30) 
\put(-60,-20){\circle*{4}}
\qbezier(-70,-10)(-70,-10)(-50,10)
\qbezier(-70,10)(-70,10)(-50,-10) 
\put(-60,0){\circle*{4}}
\qbezier(-70,30)(-70,30)(-50,10)
\qbezier(-70,10)(-70,10)(-62,18) 
\qbezier(-50,30)(-50,30)(-58,22) 
\qbezier(-70,30)(-70,30)(-50,50)
\qbezier(-70,50)(-70,50)(-50,30) 
\put(-60,40){\circle*{4}}
\qbezier(-70,70)(-70,70)(-50,50)
\qbezier(-70,50)(-70,50)(-62,58) 
\qbezier(-50,70)(-50,70)(-58,62)
\qbezier(-70,70)(-75,70)(-75,65)
\qbezier(-75,65)(-75,65)(-75,-65) 
\qbezier(-75,-65)(-75,-70)(-70,-70)
\qbezier(-50,70)(-45,70)(-45,65)
\qbezier(-45,65)(-45,65)(-45,-65) 
\qbezier(-45,-65)(-45,-70)(-50,-70)
\put(-30,0){\makebox(0,0)[c,c]{$\Longrightarrow$}}
\put(-30,10){\makebox(0,0)[c,c]{VRII}}
\qbezier(-10,-50)(-10,-50)(10,-30)
\qbezier(-10,-30)(-10,-30)(10,-50)
\put(0,-40){\circle*{4}}
\qbezier(-10,-30)(-10,-30)(10,-10)
\qbezier(-10,-10)(-10,-10)(-2,-18)
\qbezier(10,-30)(10,-30)(2,-22)
\qbezier(-10,10)(-10,10)(10,-10)
\qbezier(-10,-10)(-10,-10)(-2,-2)
\qbezier(2,2)(10,10)(10,10)
\qbezier(-10,10)(-10,10)(10,30)
\qbezier(-10,30)(-10,30)(10,10) 
\put(0,20){\circle*{4}}
\qbezier(-10,50)(-10,50)(10,30)
\qbezier(-10,30)(-10,30)(-2,38)
\qbezier(10,50)(10,50)(2,42)
\qbezier(-10,50)(-15,50)(-15,45)
\qbezier(-15,45)(-15,45)(-15,-45)
\qbezier(-15,-45)(-15,-50)(-10,-50)
\qbezier(10,50)(15,50)(15,45)
\qbezier(15,45)(15,45)(15,-45)
\qbezier(15,-45)(15,-50)(10,-50)
\put(30,0){\makebox(0,0)[c,c]{$\Longrightarrow$}}
\put(30,10){\makebox(0,0)[c,c]{RII}}
\qbezier(50,-30)(50,-30)(70,-10)
\qbezier(50,-10)(50,-10)(70,-30) 
\put(60,-20){\circle*{4}}
\qbezier(50,-10)(50,-10)(70,10)
\qbezier(50,10)(50,10)(70,-10) 
\put(60,0){\circle*{4}}
\qbezier(50,30)(50,30)(70,10)
\qbezier(50,10)(50,10)(58,18)
\qbezier(70,30)(70,30)(62,22)
\qbezier(50,30)(45,30)(45,25)
\qbezier(45,25)(45,25)(45,-25)
\qbezier(45,-25)(45,-30)(50,-30)
\qbezier(70,30)(75,30)(75,25)
\qbezier(75,25)(75,25)(75,-25)
\qbezier(75,-25)(75,-30)(70,-30)
\put(90,0){\makebox(0,0)[c,c]{$\Longrightarrow$}}
\put(90,10){\makebox(0,0)[c,c]{VRII}}
\put(90,-10){\makebox(0,0)[c,c]{RI}}
\qbezier(110,10)(105,10)(105,0)
\qbezier(105,0)(105,-10)(110,-10)
\qbezier(110,-10)(115,-10)(115,0)
\qbezier(115,0)(115,10)(110,10)
\end{picture}
\caption{Diagram of welded knot $B_{1}$.} \label{utk}
\end{figure}
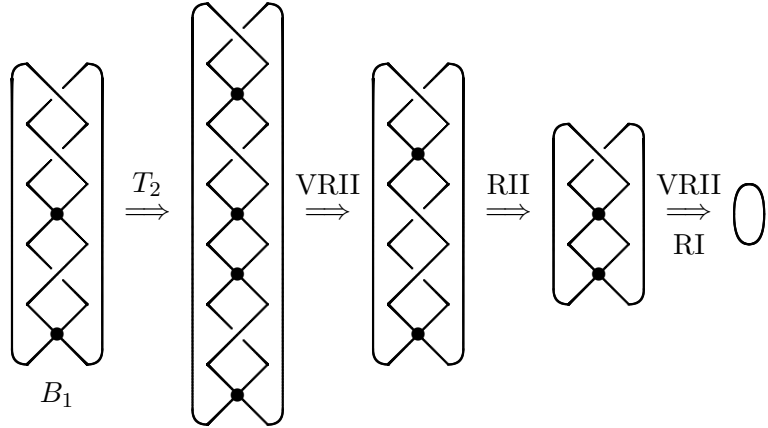
Since by \cite{unknottingweldedLi2017} $B_{1}$ is a non-trivial welded knot, we have $ut(B_{1})\geq 1$. 
As shown in Fig.~\ref{utk}, the diagram of $B_{1}$ can be deformed into a trivial welded knot diagram by applying one twist move along with welded Reidemeister moves. Therefore,  $ut(B_{1})\leq 1$. Hence $ut(B_{1})=1$.
}
\end{example}

The following result  provides lower bound and upper bound in terms of welded unknotting number and warping degree of the welded knot. 

\begin{theorem} \label{lwrbd} \label{thm2}
If $K$ is a welded knot, then $$\frac{1}{2} u_w(K)\leq ut(K)\leq d(K).$$
 \end{theorem}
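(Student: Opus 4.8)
The plan is to prove the two inequalities $\tfrac12 u_w(K)\le ut(K)$ and $ut(K)\le d(K)$ separately, since they exploit different features of the twist move: the upper bound uses that a twist move swaps the over/under data at a single classical crossing, while the lower bound uses that it creates two welded crossings.

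For the upper bound $ut(K)\le d(K)$, I would start from a diagram $D$ together with a base point and orientation (taking $D$ or $-D$) realizing $d(K)$; then exactly $d(K)$ classical crossings are met at their under-crossing first while traversing from the base point. Applying a twist move at each such crossing interchanges its over- and under-crossing, so after $d(K)$ twist moves every classical crossing is met over-crossing first, i.e.\ the diagram is descending. The newly created welded crossings are irrelevant here, since both the descending condition and the cyclic order in which the classical crossings are encountered depend only on the classical crossings. By Lemma~\ref{l1} a descending diagram is trivial, so this sequence of $d(K)$ twist moves trivializes $D$. Hence $ut(D)\le d(K)$, and minimizing over diagrams gives $ut(K)\le d(K)$.

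For the lower bound it is equivalent to prove $u_w(K)\le 2\,ut(K)$. The key local observation is that \emph{welding the crossing at which a twist move is performed absorbs the move}: if $D'$ is obtained from $D$ by a twist move at $c$ and we change the resulting (swapped) crossing into a welded crossing, then three consecutive welded crossings appear along the two strands, and by a detour move (VRII) they reduce to the single welded crossing obtained by welding $c$ directly in $D$. Thus welding $c$ renders the twist move invisible. I would take a diagram $D$ realizing $ut(K)=n$, together with a sequence of $n$ twist moves and welded Reidemeister moves carrying $D$ to a trivial diagram, and replay this sequence while replacing each twist move by the welding changes that realize it. Charging at most two crossing-to-welded changes to each twist move, the replayed sequence trivializes $D$ using at most $2n$ weldings, so that $u_w(K)\le u_w(D)\le 2n=2\,ut(K)$.

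The main obstacle is making this replacement argument rigorous. Because the $n$ twist moves are interspersed with welded Reidemeister moves, the crossings at which they act need not all belong to the fixed diagram $D$, and once a crossing is welded a subsequent classical Reidemeister move that involved it can no longer be applied verbatim. I would handle this by tracking each twisted crossing through the sequence and showing, via the absorption relation above, that replacing a twist by a weld alters the running diagram only by welding one crossing, so that the remaining moves can be carried out as welded Reidemeister moves. The factor $2$ rather than $1$ is precisely the slack needed to account for the crossing change concealed in a twist move, i.e.\ to pass between the two diagrams that differ by welding the crossing on either side of the move. Verifying that two weldings always suffice per twist, uniformly across these interactions, is the step requiring the most care.
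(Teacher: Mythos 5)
Your proof of the upper bound $ut(K)\le d(K)$ is correct and is essentially the paper's argument: twist each crossing met under-first from a base point realizing $d(K)$, note that the new welded crossings do not affect the descending condition, and invoke Lemma~\ref{l1}.

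The lower bound, however, has a genuine gap, and it sits exactly at the step you flag at the end. Your key relation is an \emph{absorption}: welding the swapped crossing after a twist yields the same diagram as welding $c$ before the twist. This relation does not \emph{realize} a twist move by weldings; it only says a twist becomes invisible after you destroy the very classical crossing it produced. Consequently your replayed sequence is not the original sequence: the running diagram now carries a welded crossing where the original carries a classical one, and any later move that uses that crossing classically --- an RII cancellation of a bigon containing it, an RIII slide over it, or a second twist at it --- has no welded counterpart. (A bigon made of one classical and one welded crossing cannot be removed by welded Reidemeister moves; making such configurations removable is precisely forbidden-move territory.) Moreover, the factor $2$ is asserted rather than constructed: your absorption mechanism consumes exactly one weld per twist, you never exhibit where the second weld is spent, and the claim that ``two weldings always suffice, uniformly across these interactions'' is the entire content of the inequality, left unproved.

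The missing idea is to replace absorption by an \emph{exact simulation}, which is how the paper argues (Fig.~\ref{lwbd}). At the crossing $c$ to be twisted, first perform an RII move on the two adjacent strands, so that $c$ sits in a stack of three classical crossings; then change the two outer ``corner'' crossings --- $c$ itself and one of the two new RII crossings --- into welded crossings. The surviving middle crossing is precisely $c$ with over and under interchanged, flanked by two welded crossings; that is, the resulting diagram is \emph{identical} to the output of the twist move. Because the simulation reproduces the post-twist diagram on the nose (at the cost of one free welded Reidemeister move and two weldings), every twist move in a sequence realizing $ut(K)$ can be replaced without disturbing anything downstream, so the tracking problem you worried about never arises. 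Reading $ut(K)$ as realized by twist moves performed at crossings of a single diagram $D'$, one does all the RII moves first to get a diagram $D''$ of $K$, and the $2\,ut(K)$ weldings performed on $D''$ produce the same trivial diagram as the twists; hence $u_w(K)\le u_w(D'')\le 2\,ut(K)$.
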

 
 \begin{proof}
 Let $D$ be a diagram of a welded knot $K$. It is easy to see that  minimum number of twist moves required to convert $D$ into a descending diagram is $\min\{d(D),~d(-D)\}$. Since a descending diagram is a trivial welded knot diagram, 
$$
ut(D)\leq \min\{d(D),~d(-D)\}.
$$ 
Thus $ut(K)\leq \min\{d(D),~d(-D)\}$, for any diagram $D$ presenting $K$. Hence $ut(K)\leq d(K)$.
 
Let $D'$ be a diagram of $K$ that realizes $ut (K)$. At every crossing $c$ where a twist move is required to unknot $D$, we produce a RII-move and then change two corner crossings to welded crossings (C - W) as presented in Fig.~\ref{lwbd}.  
 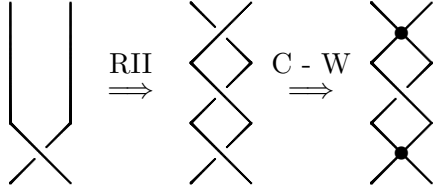
\begin{figure}[!ht]
\centering 
\unitlength=0.4mm
\begin{picture}(0,60)(0,-30)
\thicklines  
\qbezier(-70,-10)(-70,-10)(-50,-30)
\qbezier(-70,-30)(-70,-30)(-62,-22) 
\qbezier(-50,-10)(-50,-10)(-58,-18) 
\qbezier(-70,-10)(-70,-10)(-70,30)
\qbezier(-50,-10)(-50,-10)(-50,30)  
\put(-30,0){\makebox(0,0)[c,c]{$\Longrightarrow$}}
\put(-30,10){\makebox(0,0)[c,c]{RII}}
\qbezier(-10,-10)(-10,-10)(10,-30)
\qbezier(-10,-30)(-10,-30)(-2,-22)
\qbezier(10,-10)(10,-10)(2,-18)
\qbezier(-10,10)(-10,10)(10,-10)
\qbezier(-10,-10)(-10,-10)(-2,-2)
\qbezier(2,2)(10,10)(10,10)
\qbezier(-10,10)(-10,10)(10,30)
\qbezier(-10,30)(-10,30)(-2,22)
\qbezier(10,10)(10,10)(2,18)
\put(30,0){\makebox(0,0)[c,c]{$\Longrightarrow$}}
\put(30,10){\makebox(0,0)[c,c]{C - W}}
\qbezier(50,-30)(50,-30)(70,-10)
\qbezier(50,-10)(50,-10)(70,-30) 
\put(60,-20){\circle*{4}}
\qbezier(50,10)(50,10)(70,-10)
\qbezier(50,-10)(50,-10)(58,-2)
\qbezier(70,10)(70,10)(62,2)
\qbezier(50,10)(50,10)(70,30)
\qbezier(50,30)(50,30)(70,10) 
\put(60,20){\circle*{4}}
\end{picture}
\caption{A twist move as a composition of RII and changing classical crossings to welded.} \label{lwbd} 
\end{figure}
 Clearly,  as a result  we obtain the same diagram
as would have been obtained by simply applying a twist move on $c$. Repeat
this process at every crossing at which a twist move is required to unknot $D$. At the end, we would have the same diagram as would be obtained by applying twist moves at all those crossings. Thus $u_{w}(K) \leq 2 ut(K).$
 \end{proof}

\section{Infinite family of 2-bridge knots with unknotting twist number one} 

In this section we will demonstrate that there exists an infinite family of welded knots with unknotting twist number one.  
Namely, this a family of classical knots considered up to welded equivalence. 
Let ${\bf b}(2n+\frac{1}{2})$ be a two-bridge knot with the rational parameter $2n + \frac{1}{2}$, for integer $n \geq 1$.  

\begin{proposition} \label{prop4.1} 
For any integer $n \geq 1$ we have $ut({\bf b(}2n + \frac12) )=1$. 
\end{proposition}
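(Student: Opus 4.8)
The plan is to handle the two inequalities $ut \geq 1$ and $ut \leq 1$ separately. First I would record what $\mathbf{b}(2n+\frac12)$ is: writing $2n+\frac12 = \frac{4n+1}{2}$, this is the two-bridge knot $\mathbf{b}(4n+1,2)$, which in the standard rational-tangle picture is the even twist knot consisting of a clasp of two crossings attached to a vertical twist region of $2n$ crossings (so $n=1$ gives the figure-eight $4_1$, $n=2$ gives $6_1$, and so on). These are all nontrivial classical knots, and I would fix their standard diagram $D_n$ for the whole argument.

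For the lower bound, since $ut(K) \geq 1$ for every nontrivial welded knot, it suffices to check that $\mathbf{b}(4n+1,2)$ is nontrivial as a welded knot. I would argue this via the knot group: the welded knot group of a classical diagram coincides with the classical knot group, which for a nontrivial two-bridge knot is nonabelian, whereas the trivial welded knot has infinite cyclic group; since the welded knot group is a welded invariant, $\mathbf{b}(4n+1,2)$ is a nontrivial welded knot. (Alternatively one could invoke the Alexander-quandle lower bound developed later in the paper, or the estimate $\tfrac12 u_w(K) \leq ut(K)$ from Theorem~\ref{thm2}.) Either way $ut(\mathbf{b}(4n+1,2)) \geq 1$.

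For the upper bound I would argue directly, generalizing Example~\ref{exmpl1}: take the diagram $D_n$ and perform a single twist move at one crossing of the clasp. Classically, changing that clasp crossing turns the clasp into a cancelling RII pair and leaves the $2n$-crossing twist region as a closed twisted band, which untwists to the unknot by $2n$ successive RI moves. The twist move performs exactly this crossing change but in addition deposits two welded crossings flanking the chosen crossing; the core of the argument is to clear those two welded crossings. I would do this using detour moves together with VRI, VRII, and SV, sliding the welded crossings off the clasp so that the RII cancellation and the subsequent RI untwisting of the twist region go through unobstructed, terminating at the trivial diagram. This yields $ut(D_n) \leq 1$, hence $ut(\mathbf{b}(4n+1,2)) \leq 1$, and combined with the lower bound gives the claimed equality.

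The main obstacle is the uniform bookkeeping of the two welded crossings created by the twist move as the twist region collapses: one must verify, simultaneously for all $n$, that these welded crossings can be detoured past the entire twist region and absorbed by VRII and SV without reintroducing classical crossings, so that the whole diagram reduces to the trivial welded knot. I expect this to be cleanest either by an explicit sequence of figures (in the spirit of Example~\ref{exmpl1}) or by an induction that peels off two twist-region crossings at a time, with the figure-eight as the base case.
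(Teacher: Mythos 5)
Your lower bound is fine: invariance of the (welded) knot group, which for a classical diagram is the classical knot group, does show that these nontrivial two-bridge knots remain nontrivial as welded knots, and this is a legitimate, self-contained alternative to the paper's appeal to Winter's theorem that welded-equivalent classical knots are classically isotopic. The gap is in the upper bound, and it is fatal as stated. The twist move is \emph{not} ``a crossing change plus two flanking welded crossings'': a crossing change reverses the sign of the crossing, whereas the twist move exchanges the over- and under-strands \emph{while preserving the sign} (the paper states this explicitly when defining the move, and it is exactly why virtualization preserves the bracket polynomial). In the standard diagram of $\mathbf{b}(2n+\frac12)$ the two clasp crossings have equal signs; after a twist move at one of them they \emph{still} have equal signs. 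Since an RII move preserves writhe, it can only cancel a pair of crossings of opposite signs, so the two clasp crossings can never be cancelled against each other by RII, no matter how the welded crossings are detoured. Equivalently, the welded crossing deposited between the two clasp crossings cannot be cleared out of that region: with the same strand now passing over at both crossings but the signs equal, the two connecting arcs can never bound an empty bigon. So the plan ``slide the welded crossings off the clasp, then run the classical RII $+$ RI unknotting'' cannot be carried out.

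There is also a quick global check showing that some step of your argument must fail: every move you invoke (detour, VRI, VRII, SV, together with RI and RII) is a generalized Reidemeister move of \emph{virtual} knot theory; the forbidden move $F_1$, which is what distinguishes welded from virtual equivalence, never appears in your outline. If your reduction existed, it would show that a single virtualization move plus generalized Reidemeister moves unknots the figure-eight knot $\mathbf{b}(\frac52)$ \emph{as a virtual knot}, contradicting Kauffman's observation, recalled in the paper's introduction, that the Jones polynomial is unchanged by virtualization and generalized Reidemeister moves (and the figure-eight knot has non-unit Jones polynomial). Any correct proof must therefore use $F_1$ in an essential way. This is precisely what the paper does: after one twist move it performs a \emph{welded} isotopy giving the inductive reduction $D'_{2n}\to D''_{2n}\to D'_{2n-2}$, peeling off two twist-region crossings at a time until reaching $D'_0$, which is trivial by RI and VRI. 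That induction, which you mention only as a fallback, is where all the actual work of the proposition lies.
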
 

\begin{proof}
Let $D_{2n}$ denotes a diagram of the two-bridge knot ${\bf b}(2n+\frac{1}{2})$ presented in Fig.~\ref{2B1}. Apply a twist-move $T_2$ in $D_{2n}$ so that as a result we obtain welded knot $D'_{2n}$ as shown in Fig.~\ref{2B1}. $D'_{2n}$ can be transformed into welded knot denoted by $D''_{2n}$ using welded Reidemeister moves, see Fig.~\ref{2B2}. Now the welded knot $D''_{2n}$ is equivalent via welded isotopy to the welded knot $D'_{2n-2}$ as can be seen in the Fig.~\ref{2B3}. Therefore we can convert $D'_{2n}$ into $D'_{2n-2}$ using welded isotopy, in particular by repeating the number of steps if required we obtain $D'_0$. As can be seen from Fig.~\ref{2B4}, $D'_0$ is equivalent to trivial welded knot by $RI$ and $VRI$-moves, hence $ut({\bf b(}2n + \frac12) ) \leq 1$. 
Recall, that if $K$ and $K'$ are classical knots whose diagrams are welded equivalent, then they are isotopic, see for example~\cite[Theorem~2.1]{Winter}. Therefore, $ut({\bf b(}2n + \frac12) ) \geq 1$ and the statement holds.
\begin{figure}[h] 
\centerline{\includegraphics[width=0.5\linewidth]{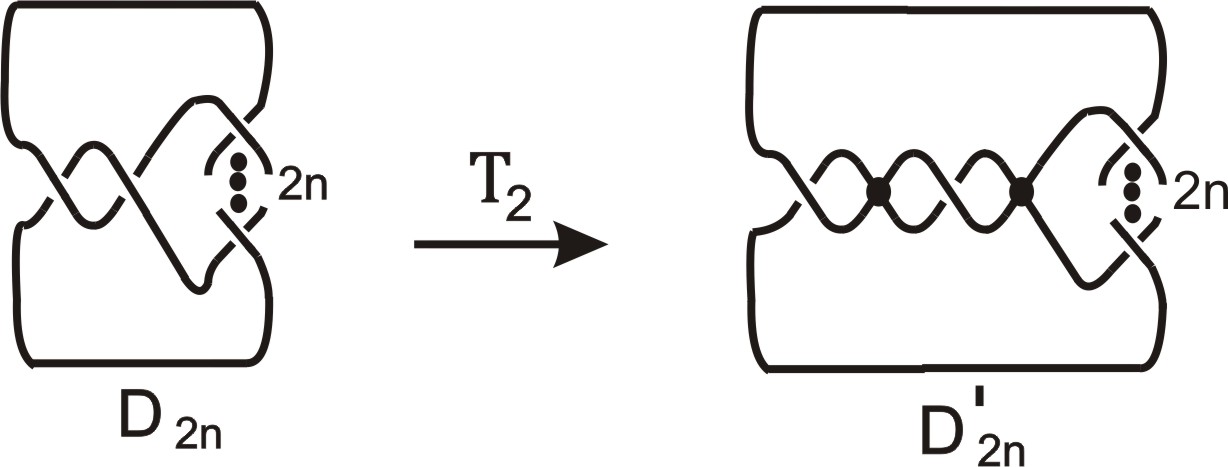}} 
\caption{$T_2$-move in two-bridge knot $D_{2n} = {\bf b}(2n+\frac{1}{2})$.} \label{2B1}
\end{figure}
\begin{figure}[h] 
\centerline{\includegraphics[width=0.8\linewidth]{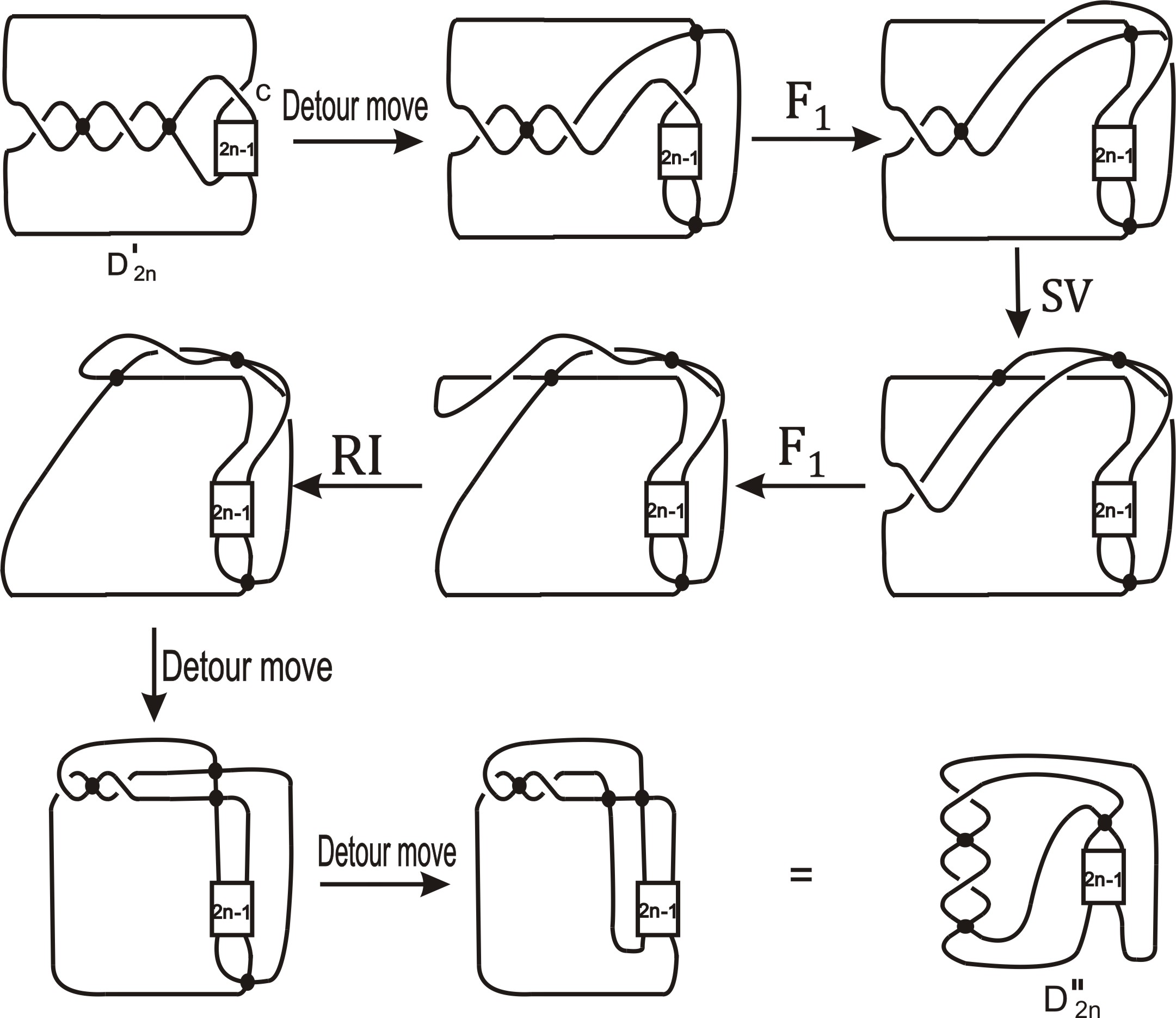}} 
\caption{Welded isotopy between $D'_{2n}$ and $D''_{2n}$.} \label{2B2}
\end{figure} 
\begin{figure}[h] 
\centerline{\includegraphics[width=0.8\linewidth]{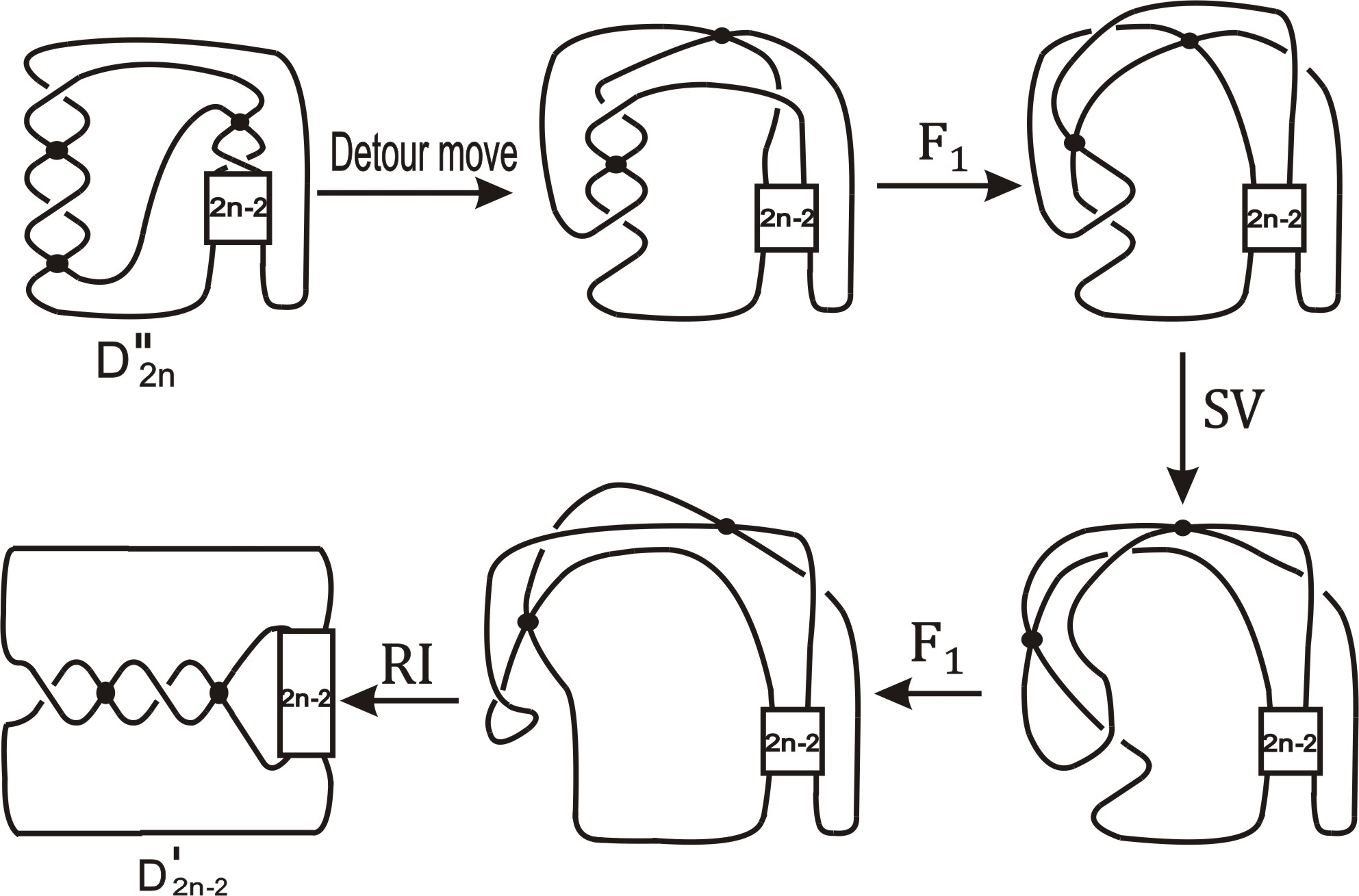}} 
\caption{Welded isotopy between $D''_{2n}$ and $D'_{2n-2}$.} \label{2B3}
\end{figure}
\begin{figure}[h] 
\centerline{\includegraphics[width=0.5\linewidth]{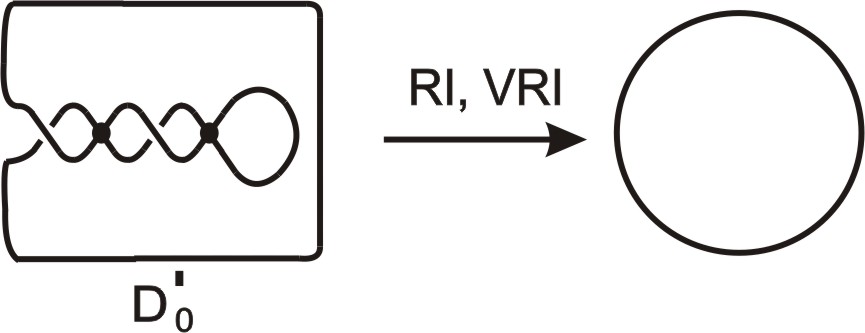}} 
\caption{$D'_{0}$ equivalent to trivial knot.} \label{2B4}
\end{figure}    
\end{proof}

\section{Quandle coloring and unknotting twist number}

An algebraic system known as quandle was  introduced independently  in  \cite{joyce1982classifying} and \cite{matveev1982distributive}.  

\begin{definition}
{\rm 
A \emph{quandle} is a non-empty set $X$ with a binary operation $*:X\times X \rightarrow X$ satisfying the following axioms:
\begin{enumerate}
\item For all $x\in X,$ $x*x=x.$
\item For all $y\in X$, the map $\phi_{y}:X\rightarrow X$ defined by $\phi_y(x)=x*y$ is invertible.
\item For all $x,y,z \in X$, $(x*y)*z=(x*z)*(y*z).$
\end{enumerate} 
}
\end{definition}
 
A \emph{dihedral quandle} $(R_n;*)$ of order $n$ is the ring $\mathbb{Z}_n (= \mathbb{Z}/n\mathbb{Z})$ with an operation defined by $x*y = 2y-x$.
An \emph{Alexander quandle} is a module over the ring  $\Lambda_p=\mathbb{Z}_{p}[t^{\pm 1}]$ of Laurent polynomials, considered with operation  $x*y=tx+(1-t)y$.   
Let  $f(t)$ be an irreducible polynomial of degree $d$ in $\Lambda_p$ with $f(t)\neq t, t-1$, then $F_q= \Lambda_{p} /f(t)$, where $q = p^{d}$ is a finite field. Elements of $F_{q}$ are presented by remainders of polynomials when divided by $f(t)$ mod $p$, so, indeed, $F_{q}$ consists of $p^{d}$ elements.   A quandle $(F_{q}, *)$ is a $\Lambda_{p}$-module with the operation $x*y = t x + (1-t) y$. 

Now we recall the definition of quandle coloring for welded knots~\cite{unknottingweldedLi2017}. 
Let $X$ be a fixed quandle and $D$ be an oriented welded knot or link diagram.
Let $A(D)$ be the set of arcs, where an arc is a piece of a curve each of whose endpoints is an undercrossing. The normals (normal vectors) are given in such a way that the ordered pair (tangent, normal) agrees with the orientation of the plane, see Fig.~\ref{qrel}.
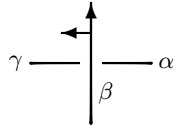
\begin{figure}[!ht]
\centering 
\unitlength=0.4mm
\begin{picture}(0,40)(0,-20)
\thicklines
\qbezier(0,-20)(0,-20)(0,20)
\qbezier(-20,0)(-20,0)(-4,0)
\qbezier(20,0)(20,0)(4,0)
\put(0,10){\vector(0,1){10}}
\put(0,10){\vector(-1,0){10}}
\put(25,0){\makebox(0,0)[c,c]{\footnotesize $\alpha$}}
\put(5,-10){\makebox(0,0)[c,c]{\footnotesize $\beta$}}
\put(-25,0){\makebox(0,0)[c,c]{\footnotesize $\gamma$}}
\end{picture}
\caption{Arcs incident to a crossing.} \label{qrel}
\end{figure}

A \emph{quandle coloring} $\mathcal{C}$ is a map $\mathcal{C}: A(D) \rightarrow X$ satisfying the following rule at every classical crossing. If the normal to the over-arc $\beta$ points from the arc $\alpha$ to $\gamma$ as in Fig.~\ref{qrel}, then $\mathcal{C}(\alpha)*\mathcal{C}(\beta) = \mathcal{C}(\gamma)$.  Let $\operatorname{Col}_X (D)$ denote the set of colorings of a welded knot diagram $D$ of a welded knot $K$ by a quandle $X$. Then the cardinality $\abs{\operatorname{Col}_X (D)}$ is a welded knot invariant. For more detail see \cite{fenn1992racks, unknottingweldedLi2017}.

From now we consider the quandle $X = F_{q}$. For a welded knot diagram $D$ the set $\operatorname{Col}_{F_{q}} (D)$ forms a linear space over $F_{q}$, whose generators are all elements in $A (D)$ and relations are given for each classical crossings as follows. 
Let $\alpha$ and $\beta$ be the under-passing arcs and  $\gamma$  be the over-passing arc at classical crossing $c$, such that the normal orientation of $\gamma$ points from $\alpha$ to $\beta$. Then $\beta = t \alpha +(1-t) \gamma$. 
Denoting the set of such relations by $R(D)$,  we have a presentation $\operatorname{Col}_{F_q} (D)\cong \langle A(D) \, | \, R(D)\rangle_{F_q}$.  
 
Similar to~\cite{unknottingweldedLi2017} we consider a presentation of $\operatorname{Col}_{F_{q}}(D)$ taking semi-arcs of the diagram as generators: 
 $\operatorname{Col}_{F_{q}}(D)\cong \langle SA(D) \, | \, SR(D)\rangle_{F_{q}}$, where:
\begin{enumerate}
\item $SA (D)$ denote the set of semi-arcs, where a semi-arc is a piece of curve whose endpoints are classical crossing points.
\item Let $\alpha, \beta, \gamma$ and $\delta$ be respectively the two under-crossing semi-arcs and two over-crossing semi-arcs at a classical crossing as presented in Fig.~\ref{1ap26}.  The normal orientation of $\gamma$ points from $\alpha$ to $\beta$ and that of $\alpha$ points from $\gamma$ to $\delta$. Then relations are  
$\beta = t \alpha +(1-t) \gamma$ and $\gamma = \delta$.
\end{enumerate}
\begin{figure}[!ht]
\centering 
\unitlength=0.4mm
\begin{picture}(0,40)(0,-20)
\thicklines
\qbezier(0,-20)(0,-20)(0,20)
\qbezier(-20,0)(-20,0)(-4,0)
\qbezier(20,0)(20,0)(4,0)
\put(0,-10){\vector(0,-1){10}}
\put(-5,-10){\vector(1,0){10}}
\put(-15,5){\vector(0,-1){10}}
\put(25,0){\makebox(0,0)[c,c]{\footnotesize $\beta$}}
\put(5,-20){\makebox(0,0)[c,c]{\footnotesize $\delta$}}
\put(5,20){\makebox(0,0)[c,c]{\footnotesize $\gamma$}}
\put(-25,0){\makebox(0,0)[c,c]{\footnotesize $\alpha$}}
\end{picture}
\caption{Semi-arcs incident to a crossing and the normal orientation.} \label{1ap26}
\end{figure}
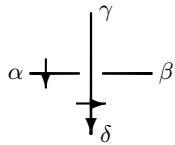

\begin{lemma} \label{crllem}
Let $D$ be a diagram of a welded knot $K$ and $c$ be a classical crossing of $D$. Let $F_{q}=\Lambda _{p}/f(t)$, where $p$ is an odd prime and  $f(t)$ is an irreducible polynomial of degree $d\geq 1$ in $\Lambda_{p}$. If $D'$ is a diagram obtained from $D$ by performing a twist move at crossing $c$, then 
$$
\abs{\dim \operatorname{Col}_{F_{q}}(D') - \dim \operatorname{Col}_{F_{q}}(D)} \leq 1.
$$
\end{lemma}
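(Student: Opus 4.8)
The plan is to localize the colouring computation to a small disk $\Delta$ containing the crossing $c$, and to exploit that the twist move (Fig.~\ref{wc}) changes the diagram only inside $\Delta$. Choosing $\Delta$ so that the knot meets $\partial\Delta$ in four points, I would record the four semi-arc colours there as a tuple $(A,B,C,E)\in F_q^{4}$. Writing $U$ for the colouring space of the common exterior $D\setminus\Delta = D'\setminus\Delta$ and $\pi\colon U\to F_q^{4}$ for the boundary-evaluation map, the interior of $\Delta$ contains no semi-arc disjoint from $\partial\Delta$ in either diagram (in $D'$ the four semi-arcs at the re-typed classical crossing run out through the two welded crossings to the boundary, and welded crossings impose no relation), so the full colouring spaces may be identified with $\operatorname{Col}_{F_q}(D)=\pi^{-1}(L_D)$ and $\operatorname{Col}_{F_q}(D')=\pi^{-1}(L_{D'})$, where $L_D,L_{D'}\subseteq F_q^{4}$ are the solution sets of the local crossing relations inside $\Delta$. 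The key structural point is that $U$ and $\pi$ are \emph{identical} for $D$ and $D'$, so the entire discrepancy is concentrated in $L_D$ versus $L_{D'}$.

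Next I would compute the two planes. Applying the semi-arc relations of Fig.~\ref{1ap26}, namely $\beta=t\alpha+(1-t)\gamma$ and $\gamma=\delta$, at the crossing $c$ expresses $(A,B,C,E)$ by two independent linear equations, giving a two-dimensional subspace $L_D$. For $D'$ I first trace the four strands through the two welded crossings created by the twist, re-express the under- and over-passing semi-arcs of the re-typed classical crossing in terms of $A,B,C,E$, and read off the relations; this again produces a two-dimensional $L_{D'}$. The crucial move-independent observation is that the constant (trivial) colouring $(A,A,A,A)$ satisfies every crossing relation and hence lies in both $L_D$ and $L_{D'}$; therefore $\dim(L_D\cap L_{D'})\ge 1$, and since $\dim L_D=\dim L_{D'}=2$, this intersection has dimension $1$ or $2$.

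The conclusion is then pure linear algebra. For any linear map $\pi$ and subspace $L$ one has $\dim\pi^{-1}(L)=\dim\ker\pi+\dim(\operatorname{im}\pi\cap L)$, so with $P:=\operatorname{im}\pi$,
\[
\dim\operatorname{Col}_{F_q}(D')-\dim\operatorname{Col}_{F_q}(D)=\dim(P\cap L_{D'})-\dim(P\cap L_D).
\]
Setting $M:=L_D\cap L_{D'}$, both $L_D/M$ and $L_{D'}/M$ have dimension at most $1$, so $(P\cap L_D)/(P\cap M)$ and $(P\cap L_{D'})/(P\cap M)$ embed into one-dimensional spaces; hence $\dim(P\cap L_D)$ and $\dim(P\cap L_{D'})$ both lie in $\{\dim(P\cap M),\,\dim(P\cap M)+1\}$ and differ by at most $1$. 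This yields $\abs{\dim\operatorname{Col}_{F_q}(D')-\dim\operatorname{Col}_{F_q}(D)}\le 1$. Being a field is what makes these dimension identities valid, and $t$ invertible (guaranteed by $f(t)\neq t$) is needed only to write the crossing relation for either sign of $c$.

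The step I expect to be delicate is the bookkeeping for $L_{D'}$: one must follow the strands correctly through the pair of welded crossings and keep the normal-orientation conventions consistent once the over- and under-strands are interchanged, and in principle this should be checked for $T_1$ and $T_2$ and for both crossing signs. However, the bound does not depend on the exact form of $L_{D'}$: all that is needed is that $L_{D'}$ is again a $2$-plane containing the trivial colouring, which holds uniformly. This makes the linear-algebra lemma, rather than the diagram chase, the real engine of the argument and lets me avoid a case analysis.
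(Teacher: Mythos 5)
Your proposal is correct, and it reaches the bound by a genuinely different route than the paper. The paper argues globally with the semi-arc presentation $\operatorname{Col}_{F_q}(D)\cong\langle SA(D)\,|\,SR(D)\rangle_{F_q}$: since the new welded crossings neither cut semi-arcs nor impose relations, $SA(D)=SA(D')$ and all relations away from $c$ agree; at $c$ it rewrites the relation $\beta=t\alpha+(1-t)\gamma$ modulo $\gamma=\delta$ as $\beta=t\alpha+\delta-t\gamma$, observes that after the twist the rewritten relation is literally the same, and concludes that the two presentations differ only by exchanging the single relation $\gamma=\delta$ for $\alpha=\beta$; dropping one linear relation raises the dimension by at most one and adding one lowers it by at most one, and this is checked separately for the $T_1$ and $T_2$ cases. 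You instead localize: the exterior coloring space $U$ and the evaluation map $\pi$ are common to $D$ and $D'$, so the whole comparison collapses to the two local boundary-condition planes $L_D,L_{D'}\subseteq F_q^{4}$, and your closing linear-algebra lemma needs only that each is a $2$-plane containing the diagonal of trivial colorings. This buys uniformity: no case analysis over $T_1$, $T_2$, or crossing signs, and the identical argument bounds the jump of $\dim\operatorname{Col}_{F_q}$ under \emph{any} local move that replaces a classical crossing by a tangle whose local coloring condition is a $2$-plane through the diagonal (crossing change, classical-to-welded change, and so on). What the paper's computation buys in exchange is explicitness: it exhibits the shared relation and the exact swap $\gamma=\delta\leftrightarrow\alpha=\beta$, so one can see precisely when the dimension actually changes rather than merely that it moves by at most one. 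The only hypothesis you should state explicitly, since your identifications $\operatorname{Col}_{F_q}(D)=\pi^{-1}(L_D)$ and $\operatorname{Col}_{F_q}(D')=\pi^{-1}(L_{D'})$ depend on it, is the paper's convention (content of Fig.~\ref{2ap26}) that welded crossings neither cut semi-arcs nor impose relations; this is what guarantees every semi-arc inside $\Delta$ runs out to $\partial\Delta$ carrying its boundary label, and it plays exactly the role that $SA(D)=SA(D')$ plays in the paper's proof.
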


\begin{proof}
Consider the presentation $\operatorname{Col}_{F_{q}}(D) = \langle SA(D) \, |  \, SR(D)\rangle_{F_{q}}$. Let $D'$ be the diagram obtained from $D$ by performing a twist move at crossing $c$. It is easy to see that the welded crossings that appear by  performing a twist move does not change lebels of semi-arcs,  see Fig.~\ref{2ap26}. 
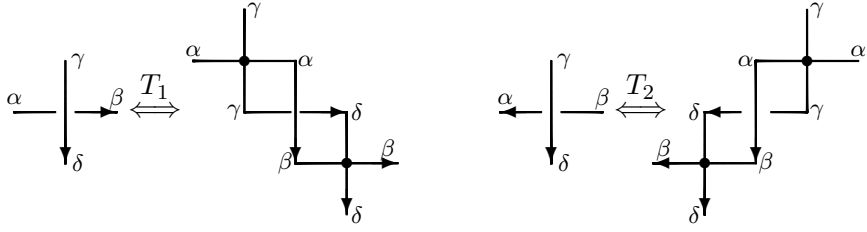
\begin{figure}[!ht]
\centering 
\unitlength=0.34mm
\begin{picture}(0,80)(0,-40)
\thicklines
\qbezier(-150,-20)(-150,-20)(-150,20)
\qbezier(-170,0)(-170,0)(-154,0)
\qbezier(-130,0)(-130,0)(-146,0)
\put(-150,-10){\vector(0,-1){10}}
\put(-140,0){\vector(1,0){10}}
\put(-130,5){\makebox(0,0)[c,c]{\footnotesize $\beta$}}
\put(-145,-20){\makebox(0,0)[c,c]{\footnotesize $\delta$}}
\put(-145,20){\makebox(0,0)[c,c]{\footnotesize $\gamma$}}
\put(-170,5){\makebox(0,0)[c,c]{\footnotesize $\alpha$}}
\put(-115,0){\makebox(0,0)[c,c]{$\Longleftrightarrow$}}
\put(-115,10){\makebox(0,0)[c,c]{$T_{1}$}}
\qbezier(-100,20)(-100,20)(-60,20)
\qbezier(-80,40)(-80,40)(-80,0)
\put(-80,20){\circle*{4}}
\qbezier(-60,20)(-60,20)(-60,-20)
\qbezier(-80,0)(-80,0)(-62,0)
\qbezier(-40,0)(-40,0)(-58,0)
\qbezier(-60,-20)(-60,-20)(-20,-20)
\qbezier(-40,0)(-40,0)(-40,-40)
\put(-40,-20){\circle*{4}}
\put(-60,-10){\vector(0,-1){10}}
\put(-50,0){\vector(1,0){10}}
\put(-76,40){\makebox(0,0)[c,c]{\footnotesize $\gamma$}}
\put(-100,24){\makebox(0,0)[c,c]{\footnotesize $\alpha$}}
\put(-56,20){\makebox(0,0)[c,c]{\footnotesize $\alpha$}}
\put(-84,0){\makebox(0,0)[c,c]{\footnotesize $\gamma$}}
\put(-36,0){\makebox(0,0)[c,c]{\footnotesize $\delta$}}
\put(-64,-20){\makebox(0,0)[c,c]{\footnotesize $\beta$}}
\put(-24,-15){\makebox(0,0)[c,c]{\footnotesize $\beta$}}
\put(-36,-40){\makebox(0,0)[c,c]{\footnotesize $\delta$}}
\put(-30,-20){\vector(1,0){10}}
\put(-40,-30){\vector(0,-1){10}}
\qbezier(40,-20)(40,-20)(40,20)
\qbezier(20,0)(20,0)(36,0)
\qbezier(60,0)(60,0)(44,0)
\put(40,-10){\vector(0,-1){10}}
\put(30,0){\vector(-1,0){10}}
\put(60,5){\makebox(0,0)[c,c]{\footnotesize $\beta$}}
\put(45,-20){\makebox(0,0)[c,c]{\footnotesize $\delta$}}
\put(45,20){\makebox(0,0)[c,c]{\footnotesize $\gamma$}}
\put(22,5){\makebox(0,0)[c,c]{\footnotesize $\alpha$}}
\put(75,0){\makebox(0,0)[c,c]{$\Longleftrightarrow$}}
\put(75,10){\makebox(0,0)[c,c]{$T_{2}$}}
\qbezier(80,-20)(80,-20)(120,-20)
\qbezier(100,-40)(100,-40)(100,0)
\put(100,-20){\circle*{4}}
\qbezier(120,20)(120,20)(120,-20)
\qbezier(100,0)(100,0)(114,0)
\qbezier(140,0)(140,0)(126,0)
\qbezier(120,20)(120,20)(160,20)
\qbezier(140,0)(140,0)(140,40)
\put(140,20){\circle*{4}}
\put(120,-10){\vector(0,-1){10}}
\put(110,0){\vector(-1,0){10}}
\put(144,40){\makebox(0,0)[c,c]{\footnotesize $\gamma$}}
\put(116,20){\makebox(0,0)[c,c]{\footnotesize $\alpha$}}
\put(160,24){\makebox(0,0)[c,c]{\footnotesize $\alpha$}}
\put(144,0){\makebox(0,0)[c,c]{\footnotesize $\gamma$}}
\put(96,0){\makebox(0,0)[c,c]{\footnotesize $\delta$}}
\put(124,-20){\makebox(0,0)[c,c]{\footnotesize $\beta$}}
\put(84,-15){\makebox(0,0)[c,c]{\footnotesize $\beta$}}
\put(96,-40){\makebox(0,0)[c,c]{\footnotesize $\delta$}}
\put(90,-20){\vector(-1,0){10}}
\put(100,-30){\vector(0,-1){10}}
\end{picture}
\caption{Quandle colorings in respect to twist moves.} \label{2ap26}
\end{figure}
Therefore $SA(D) = SA(D')$. We note that for each classical crossing of $D'$ except $c$ the relation remains the same as in $D$. Let us consider the crossing $c \in D$. Let $\alpha, \beta, \gamma$ and $\delta$ be respectively the two underlying semi-arcs and two overlying semi-arcs, such that the normal orientation of $\gamma$ points from $\alpha$ to $\beta$ and that of $\alpha$ points from $\gamma$ to $\delta$ as defined in Fig.~\ref{1ap26}. Therefore, relations in $SR(D)$ corresponding to $c \in D$ are
$$
\gamma = \delta \qquad \mbox{and}  \qquad \beta = t \alpha + (1-t) \gamma.
$$
Hence we can replace the relation $\beta = t \alpha +(1-t) \gamma$ with $\beta = t\alpha + \delta -t \gamma$ or $\beta = t\alpha +\gamma - t \delta$.  Therefore,  $\operatorname{Col}_{F_{q}} (D) \cong \langle SA(D) \, | \, \widehat{SR} (D) \rangle_{F_{q}}$ and $\operatorname{Col}_{F_{q}} (D) \cong \langle SA(D) \, | \, \overline{SR }(D)  \rangle_{F_{q}}$, where
$$
\widehat{SR} (D) = SR(D)\cup \{\beta =t \alpha + \delta -t \gamma \} \setminus \{\beta= t \alpha +(1-t) \gamma \}
$$
and
$$
\overline{SR} (D) = SR (D)\cup \{ \beta = t \alpha + \gamma - t \delta \} \setminus \{ \beta = t \alpha + (1-t) \gamma \}.
$$
Consider twist moves $T_{1}$ and $T_{2}$ according to Fig.~\ref{2ap26}.  

\underline{(1) The case of $T_1$-move.} Relations in $SR (D')$, corresponding to $c \in D'$, are $\alpha = \beta$ and $\delta = t \gamma + (1-t)\alpha$. Hence we can replace the relation $\delta = t \gamma +(1-t) \alpha$ with $\delta  = t \gamma + \beta- t\alpha$. Therefore, $\operatorname{Col}_{F_{q}}(D')\cong \langle SA (D') \, | \, \widehat{SR} (D')\rangle_{F_{q}}$, where
$$
\widehat{SR} (D')= SR (D')\cup \{\delta = t\gamma +\beta-t\alpha\}\setminus \{\delta=t\gamma+(1-t)\alpha\}.\
$$  
Observe that $\delta = t \gamma +\beta-t\alpha$ is equivalent to  $\beta = t\alpha +\delta-t\gamma$. Therefore 
$$
\widehat{SR} (D') = \widehat{SR} (D) \cup\{\alpha = \beta \}\setminus \{\gamma = \delta \}
$$ 
and $\dim \operatorname{Col}_{F_{q}}(D')= \dim \operatorname{Col}_{F_{q}}(D) + r$, where $r \in \{-1,0,1\}$. 

\underline{(2) The case of $T_2$-move.} Relators in $SR (D')$, corresponding to $c \in D'$, are
$\alpha = \beta$  and $\gamma = t \delta +(1-t) \beta$.  
Hence we can replace the relation $\gamma = t \delta +(1-t) \beta$ with $\gamma =t \delta +\beta -t \alpha$ such that $\operatorname{Col}_{F_{q}}(D') \cong \langle SA (D') \, | \, \overline{SR} (D') \rangle_{F_{q}}$, where
$$
\overline{SR} (D')= SR (D') \cup \{\gamma=t \delta+\beta-t\alpha\} \setminus \{\gamma =t \delta +(1-t)\beta \}.
$$ 
Observe that $\gamma = t \delta + \beta - t \alpha$ is equivalent to $\beta = t \alpha + \gamma - t \delta$. Therefore  
$$
\overline{SR} (D') = \overline{SR} (D') \cup \{\alpha = \beta \} \setminus \{\gamma = \delta \}
$$ 
and $\dim \operatorname{Col}_{F_{q}}(D) = \dim \operatorname{Col}_{F_{q}}(D')+r$, where  $r \in \{-1,0,1\}$. 

Hence, for both twist moves $T_{1}$ and $T_{2}$ we obtain 
$$
\abs{\dim \operatorname{Col}_{F_{q}}(D') - \dim \operatorname{Col}_{F_{q}}(D)}  \leq 1
$$
and lemma is proved.
\end{proof}

Analogously \cite{unknottingweldedLi2017} we come up at a lower bound for twist number in terms of the dimension of the coloring space by quandle $F_q$.
 
\begin{theorem} \label{crlthm}
Let $D$ be a diagram of a welded knot $K$ and $\operatorname{Col}_{F_{q}} (D)$ be the set of colorings of $D$ by $F_{q}$. Then the following inequality holds: 
$$
\dim \operatorname{Col}_{F_{q}}(D)-1\leq ut(K).
$$
\end{theorem}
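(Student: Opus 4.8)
The plan is to combine Lemma~\ref{crllem} with the fact that $\dim \operatorname{Col}_{F_{q}}(D)$ is a welded knot invariant, mirroring the argument of~\cite{unknottingweldedLi2017}. First I would observe that since $\operatorname{Col}_{F_{q}}(D)$ is a linear space over the field $F_{q}$, its cardinality equals $q^{\dim \operatorname{Col}_{F_{q}}(D)}$; as $\abs{\operatorname{Col}_{F_{q}}(D)}$ is a welded knot invariant, so is $\dim \operatorname{Col}_{F_{q}}(D)$. In particular this dimension does not depend on the chosen diagram of $K$, which is what lets me pass freely between the given diagram $D$ and any other diagram representing $K$.

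Next I would record the two endpoints of the estimate. For the trivial welded knot, represented by a crossingless circle (possibly carrying only welded crossings, which impose no relations in $R(D)$), the set of arcs consists of a single arc, so $\operatorname{Col}_{F_{q}} \cong F_{q}$ has dimension $1$. On the other side, I would choose a diagram $D'$ of $K$ realizing $ut(K)=m$: there is a finite sequence of moves transforming $D'$ into a trivial welded knot diagram, consisting of exactly $m$ twist moves together with welded Reidemeister moves.

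The core of the argument is a telescoping estimate along this sequence. Welded Reidemeister moves preserve $\operatorname{Col}_{F_{q}}$, and hence its dimension, exactly, while each of the $m$ twist moves changes the dimension by at most $1$ by Lemma~\ref{crllem}. Writing the sequence as $D' = E_{0}, E_{1}, \ldots, E_{m}$, where $E_{i}$ is obtained from $E_{i-1}$ by a block of welded Reidemeister moves followed by a single twist move, the triangle inequality gives
$$
\abs{\dim \operatorname{Col}_{F_{q}}(E_{m}) - \dim \operatorname{Col}_{F_{q}}(E_{0})} \leq \sum_{i=1}^{m} \abs{\dim \operatorname{Col}_{F_{q}}(E_{i}) - \dim \operatorname{Col}_{F_{q}}(E_{i-1})} \leq m.
$$
Since $E_{m}$ is trivial we have $\dim \operatorname{Col}_{F_{q}}(E_{m})=1$, and since the dimension is a diagram-independent invariant we have $\dim \operatorname{Col}_{F_{q}}(E_{0})=\dim \operatorname{Col}_{F_{q}}(D)$. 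Substituting yields $\abs{\dim \operatorname{Col}_{F_{q}}(D)-1}\leq m = ut(K)$, which in particular gives the desired bound $\dim \operatorname{Col}_{F_{q}}(D)-1\leq ut(K)$.

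The argument is essentially forced once Lemma~\ref{crllem} is available, so no step is genuinely difficult; the only points requiring care are the bookkeeping that permits switching between the given diagram $D$ and the $ut$-realizing diagram $D'$ — which is exactly where diagram-independence of the dimension is invoked — and the observation that the interspersed welded Reidemeister moves contribute nothing to the telescoping sum, leaving only the $m$ twist-move contributions, each bounded by $1$.
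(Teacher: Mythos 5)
Your proposal is correct and follows essentially the same route as the paper: a telescoping estimate along the unknotting sequence, applying Lemma~\ref{crllem} at each twist move and the triangle inequality, then evaluating the dimension at the trivial endpoint. Your treatment is in fact slightly more careful than the paper's, since you make explicit the diagram-independence of $\dim \operatorname{Col}_{F_{q}}$ (which the paper uses tacitly when it assumes $D$ realizes $ut(K)$) and you note that interspersed welded Reidemeister moves contribute nothing to the sum.
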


\begin{proof} 
Let $D$ be a diagram of welded knot $K$ which realize $ut(K)$. Let $ut(K)=n$ and $D=D_{0}, D_{1},\ldots, D_{n}$ be a sequence of diagrams such that each $D_{i}$ is obtained from $D_{i+1}$ by applying one twist move and $D_{n}$ is a diagram of the trivial welded knot. Therefore, using Lemma~\ref{crllem}, we have
$$
\begin{gathered} 
\left| \dim \operatorname{Col}_{F_{q}}(D_0) - \dim \operatorname{Col}_{F_{q}}(D_n) \right| \\  
=  \Big| \displaystyle  \sum _{ i=0}^{n-1} \big(\dim  \operatorname{Col}_{F_{q}}(D_i) - \dim \operatorname{Col}_{F_{q}}(D_{i+1})\big) \Big| 
\\  \leq  \displaystyle  \sum _{ i=0}^{n-1} \left| \dim \operatorname{Col}_{F_{q}}(D_i) - \dim \operatorname{Col}_{F_{q}}(D_{i+1}) \right| \leq n.                        
\end{gathered}
$$
Since $\dim \operatorname{Col}_{F_{q}}(D_n)=1$ and $\dim  \operatorname{Col}_{F_{q}}(D_0) \geq \dim \operatorname{Col}_{F_{q}}(D_n)$, we get $\dim  \operatorname{Col}_{F_{q}}(D)-1\leq ut(K)$. 
 \end{proof} 
 
Denote by $B_{n}$ welded knot that admits diagram $D_{n}$ with $3n$ classical crossings and $2n$ welded crossings as presented in Fig.~\ref{4ap26}.  Recall that $B_{1}$ was presented in Fig.~\ref{utk} and $B_{n}$ can be considered as a connected sum of $n$ copies of $B_{1}$. 
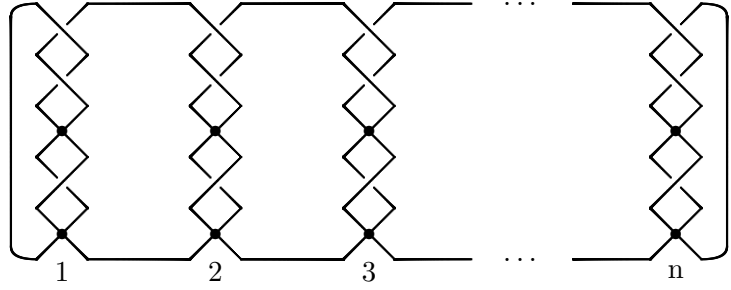
\begin{figure}[!ht]
\centering 
\unitlength=0.34mm
\begin{picture}(0,100)(0,-50)
\thicklines
\qbezier(-130,-50)(-130,-50)(-110,-30)
\qbezier(-110,-50)(-110,-50)(-130,-30) 
\put(-120,-40){\circle*{4}}
\qbezier(-130,-30)(-130,-30)(-110,-10)
\qbezier(-130,-10)(-130,-10)(-122,-18) 
\qbezier(-110,-30)(-110,-30)(-118,-22)
\qbezier(-130,10)(-130,10)(-110,-10)
\qbezier(-130,-10)(-130,-10)(-110,10) 
\put(-120,0){\circle*{4}}
\qbezier(-130,30)(-130,30)(-110,10)
\qbezier(-130,10)(-130,10)(-122,18) 
\qbezier(-110,30)(-110,30)(-118,22)
\qbezier(-130,50)(-130,50)(-110,30)
\qbezier(-130,30)(-130,30)(-122,38) 
\qbezier(-110,50)(-110,50)(-118,42)
\qbezier(-130,50)(-140,50)(-140,45)
\qbezier(-140,45)(-140,45)(-140,-45)
\qbezier(-140,-45)(-140,-50)(-130,-50)
\qbezier(-110,50)(-70,50)(-70,50)
\qbezier(-110,-50)(-70,-50)(-70,-50)
\put(-120,-55){\makebox(0,0)[c,c]{1}}
\qbezier(-70,-50)(-70,-50)(-50,-30)
\qbezier(-70,-30)(-70,-30)(-50,-50) 
\put(-60,-40){\circle*{4}}
\qbezier(-70,-30)(-70,-30)(-50,-10)
\qbezier(-70,-10)(-70,-10)(-62,-18) 
\qbezier(-50,-30)(-50,-30)(-58,-22)
\qbezier(-70,10)(-70,10)(-50,-10)
\qbezier(-70,-10)(-70,-10)(-50,10) 
\put(-60,0){\circle*{4}}
\qbezier(-70,30)(-70,30)(-50,10)
\qbezier(-70,10)(-70,10)(-62,18) 
\qbezier(-50,30)(-50,30)(-58,22)
\qbezier(-70,50)(-70,50)(-50,30)
\qbezier(-70,30)(-70,30)(-62,38) 
\qbezier(-50,50)(-50,50)(-58,42)
\qbezier(-50,50)(-10,50)(-10,50)
\qbezier(-50,-50)(-10,-50)(-10,-50)
\put(-60,-55){\makebox(0,0)[c,c]{2}}
\qbezier(-10,-50)(-10,-50)(10,-30)
\qbezier(-10,-30)(-10,-30)(10,-50) 
\put(0,-40){\circle*{4}}
\qbezier(-10,-30)(-10,-30)(10,-10)
\qbezier(-10,-10)(-10,-10)(-2,-18) 
\qbezier(10,-30)(10,-30)(2,-22)
\qbezier(-10,10)(-10,10)(10,-10)
\qbezier(-10,-10)(-10,-10)(10,10) 
\put(0,0){\circle*{4}}
\qbezier(-10,30)(-10,30)(10,10)
\qbezier(-10,10)(-10,10)(-2,18) 
\qbezier(10,30)(10,30)(2,22)
\qbezier(-10,50)(-10,50)(10,30)
\qbezier(-10,30)(-10,30)(-2,38) 
\qbezier(10,50)(10,50)(2,42)
\qbezier(-50,50)(-10,50)(-10,50)
\qbezier(-50,-50)(-10,-50)(-10,-50)
\put(0,-55){\makebox(0,0)[c,c]{3}}
\qbezier(10,50)(10,50)(40,50)
\qbezier(10,-50)(10,-50)(40,-50)
\put(60,50){\makebox(0,0)[c,c]{$\dots$}}
\put(60,-50){\makebox(0,0)[c,c]{$\dots$}}
\qbezier(80,50)(80,50)(110,50)
\qbezier(80,-50)(80,-50)(110,-50)
\qbezier(110,-50)(110,-50)(130,-30)
\qbezier(110,-30)(110,-30)(130,-50) 
\put(120,-40){\circle*{4}}
\qbezier(110,-30)(110,-30)(130,-10)
\qbezier(110,-10)(110,-10)(118,-18) 
\qbezier(130,-30)(130,-30)(122,-22)
\qbezier(110,10)(110,10)(130,-10)
\qbezier(110,-10)(110,-10)(130,10) 
\put(120,0){\circle*{4}}
\qbezier(110,30)(110,30)(130,10)
\qbezier(110,10)(110,10)(118,18) 
\qbezier(130,30)(130,30)(122,22)
\qbezier(110,50)(110,50)(130,30)
\qbezier(110,30)(110,30)(118,38) 
\qbezier(130,50)(130,50)(122,42)
\qbezier(130,50)(140,50)(140,45)
\qbezier(130,-50)(140,-50)(140,-45)
\qbezier(140,45)(140,45)(140,-45)
\put(120,-55){\makebox(0,0)[c,c]{n}}
\end{picture}
\caption{Diagram $D_{n}$ of a welded knot $B_{n}$.} \label{4ap26}
\end{figure}

\begin{lemma} \label{lemma3}
The following equality holds: $ut (B_{n})= n$. 
\end{lemma}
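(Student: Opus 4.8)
The plan is to prove the two inequalities $ut(B_n)\le n$ and $ut(B_n)\ge n$ separately, using Example~\ref{exmpl1} for the upper bound and the coloring lower bound of Theorem~\ref{crlthm} for the lower bound. Throughout I would work with the Alexander quandle $F_q=\Lambda_3/(t+1)$, that is $F_3$ with $t=-1$; this is an admissible choice in Lemma~\ref{crllem} and Theorem~\ref{crlthm} since $p=3$ is an odd prime and $f(t)=t+1$ is irreducible of degree $1$ with $f(t)\ne t,\,t-1$. With $t=-1$ the quandle operation becomes $x*y=2y-x$, so $\operatorname{Col}_{F_3}$ is exactly Fox $3$-coloring.

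For the upper bound I would exhibit a sequence of $n$ twist moves trivializing the diagram $D_n$. Since $B_n$ is the connected sum of $n$ copies of $B_1$, and Example~\ref{exmpl1} shows that a single twist move $T_2$ (together with welded Reidemeister moves) turns $B_1$ into the trivial diagram, I would apply the Example~\ref{exmpl1} reduction inside each of the $n$ blocks of $D_n$ in turn. Performing the $T_2$-move at the designated crossing of the $i$-th block and then the same welded Reidemeister reductions absorbs that block and leaves the series of the remaining blocks; iterating yields the trivial welded knot after $n$ twist moves. Equivalently, $ut$ is subadditive under connected sum, so $ut(B_n)\le n\,ut(B_1)=n$.

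For the lower bound I would compute $\dim\operatorname{Col}_{F_3}(D_n)$ and feed it into Theorem~\ref{crlthm}. First, reading off the semi-arc presentation $\langle SA(D_1)\,|\,SR(D_1)\rangle_{F_3}$ of $B_1$: the over-crossing relations identify the two semi-arcs of each over-strand, leaving three variables, and each of the three under-crossing relations $\beta=t\alpha+(1-t)\gamma$ collapses, at $t=-1$ over $F_3$, to one and the same equation, namely that the sum of the three variables is zero. Hence the relation matrix has rank $1$ and $\dim\operatorname{Col}_{F_3}(B_1)=2$, witnessed by a genuine nonconstant $3$-coloring. Using that the dimension of the Fox (Alexander) coloring space is additive under connected sum up to the shared one-dimensional space of constant colorings, $\dim\operatorname{Col}_{F_3}(K_1\#K_2)=\dim\operatorname{Col}_{F_3}(K_1)+\dim\operatorname{Col}_{F_3}(K_2)-1$, induction over the $n$ blocks gives $\dim\operatorname{Col}_{F_3}(D_n)=2n-(n-1)=n+1$. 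Theorem~\ref{crlthm} then yields $ut(B_n)\ge \dim\operatorname{Col}_{F_3}(D_n)-1=n$, and combined with the upper bound this proves $ut(B_n)=n$.

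The step I expect to be the main obstacle is the dimension count $\dim\operatorname{Col}_{F_3}(D_n)=n+1$. The cleanest route is the connected-sum additivity of the coloring dimension, which in the welded setting must be justified by tracking how the single gluing arc is shared between consecutive blocks; alternatively one can avoid additivity entirely and analyze the full relation matrix of $D_n$ directly, showing that the $n$ blocks contribute $n$ independent copies of the ``sum-zero'' relation while the inter-block and closure arcs force the corank to be exactly $n+1$. Verifying that no extra rank is lost or gained at the junctions between blocks is the delicate point; once the corank is pinned down, the remaining ingredients are the short linear-algebra computation for $B_1$ and the block-wise reduction behind the upper bound.
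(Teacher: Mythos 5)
Your proposal is correct and follows essentially the same route as the paper: the upper bound comes from $n$ twist moves, one per block (the paper applies them simultaneously at the $n$ crossings lying between welded crossings and then simplifies by VRII and RII, which is the same reduction as your iterated use of Example~\ref{exmpl1}), and the lower bound comes from Theorem~\ref{crlthm} applied to $3$-colorings, your identification $R_3\cong\Lambda_3/(t+1)$ being exactly what makes that theorem applicable. The only difference is that the paper simply cites \cite{unknottingweldedLi2017} for the computation $\dim\operatorname{Col}_{R_3}(D_n)=n+1$, whereas you sketch a self-contained block-by-block rank count; that count (and its delicate junction analysis) is work the paper avoids by citation, but it is consistent with the cited result and sound.
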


\begin{proof}
It was proved in \cite[pf.~9.2]{unknottingweldedLi2017} that $\dim \operatorname{Col}_{R_{3}}(D_{n})=n+1$. Therefore by Theorem~\ref{crlthm}, we have $n\leq ut(B_{n})$. Let us perform twist moves at $n$ crossings lying between welded crossings  in diagram $D_{n}$ and then Reidemeister moves VRII and RII.  
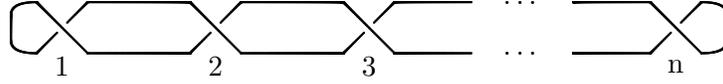
\begin{figure}[!ht]
\centering 
\unitlength=0.34mm
\begin{picture}(0,30)(0,30)
\thicklines
\qbezier(-130,50)(-130,50)(-110,30)
\qbezier(-130,30)(-130,30)(-122,38) 
\qbezier(-110,50)(-110,50)(-118,42)
\qbezier(-130,50)(-140,50)(-140,45)
\qbezier(-140,45)(-140,45)(-140,35)
\qbezier(-140,35)(-140,30)(-130,30)
\qbezier(-110,50)(-70,50)(-70,50)
\qbezier(-110,30)(-70,30)(-70,30)
\put(-120,25){\makebox(0,0)[c,c]{1}}
\qbezier(-70,50)(-70,50)(-50,30)
\qbezier(-70,30)(-70,30)(-62,38) 
\qbezier(-50,50)(-50,50)(-58,42)
\qbezier(-50,50)(-10,50)(-10,50)
\qbezier(-50,30)(-10,30)(-10,30)
\put(-60,25){\makebox(0,0)[c,c]{2}}
\qbezier(-10,50)(-10,50)(10,30)
\qbezier(-10,30)(-10,30)(-2,38) 
\qbezier(10,50)(10,50)(2,42)
\put(0,25){\makebox(0,0)[c,c]{3}}
\qbezier(10,50)(10,50)(40,50)
\qbezier(10,30)(10,30)(40,30)
\put(60,50){\makebox(0,0)[c,c]{$\dots$}}
\put(60,30){\makebox(0,0)[c,c]{$\dots$}}
\qbezier(80,50)(80,50)(110,50)
\qbezier(80,30)(80,30)(110,30)
\qbezier(110,50)(110,50)(130,30)
\qbezier(110,30)(110,30)(118,38) 
\qbezier(130,50)(130,50)(122,42)
\qbezier(130,50)(140,50)(140,45)
\qbezier(130,30)(140,30)(140,35)
\qbezier(140,45)(140,45)(140,35)
\put(120,25){\makebox(0,0)[c,c]{n}}
\end{picture}
\caption{Diagram of a trivial welded knot.} \label{5ap26}
\end{figure}
 Then the resulting diagram is equivalent to the diagram as shown in Fig.~\ref{5ap26}, which is a trivial welded knot diagram. Therefore $ut(B_{n})\leq n$. 
\end{proof}

\section{Gordian complex of welded knots}

It is known that unknotting moves for knots and virtual knots admit to complexes of knots or links, see, for example~\cite{GPV}. 
Z.~Li, F.~Lei and J.~Wu \cite{unknottingweldedLi2017} defined the distance between welded knots using crossing change, $\sharp$-move and $\Delta$-move. They provided explicit examples of welded knots $\{K,K'\}$ such that distance by crossing change is $d_{X}(K,K')=1$. Also it was shown that for all $m,n \in \mathbb{N}$, welded knots $K_m, K_n$ exist such that $d_{X}(K_m,K_n)\leq \mid m-n \mid$.

As we demonstrated above, the twist move along with welded Reidemeister moves is also an unknotting operation for welded knots. Therefore for any pair of welded knots $\{K,K'\}$, a diagram $D$ of $K$ can be converted into a diagram $D'$ of $K'$ by a finite sequence of twist moves and welded Reidemeister moves. We define a \emph{twist-distance} $d_T(K,K')$ between two welded knots $K$ and $K'$ as the  minimum number of twist moves required to convert $D$ into $D'$, where minimum is taken over all diagrams $D$ of $K$ and $D'$ of $K'$. \emph{Gordian complex} $\mathcal{G}_T$ of welded knots by twist move is defined by considering the set of all welded knot isotopy classes as \emph{vertex set} of $\mathcal{G}_T$ and a set of welded knots $\{K_0,...,K_n\}$ spans an $n$-simplex if and only if $d_T(K_i,K_j)=1$ for all $i\neq j \in \{0,1,\ldots,n\}$.

\begin{remark}
{\rm 
For any welded knot $K$, we have $d_T(K,U) = u_t(K)$, i.e, the twist number of $K$ where $U$ is unknot. Thus if $ut(K)=1$, the pair $\{K,U\}$ spans an 1-simplex in $\mathcal{G}_T$. For the welded knot $W_{n} = {\bf b} (2n + \frac{1}{2})$ given in Proposition~\ref{prop4.1}, $\{W_{n},U\}$ spans an 1-simplex in $\mathcal{G}_T$.
}
\end{remark} 

There are infinitely many pairs $\{K,K'\}$  such that $d_T(K,K')=1$, where $K$ is a non-trivial welded knot and $K'$ is a non-trivial classical knot.  

\begin{proposition}
$\mathcal{G}_T$ contains infinitely many 1-simplexes with one vertex to be a non-classical welded knot and other vertex to be a classical knot. 
\end{proposition}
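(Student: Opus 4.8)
The plan is to produce, for each member of an infinite family of pairwise non-isotopic non-trivial classical knots, a non-classical welded partner lying at twist-distance exactly one. Fix such a family $\{C_n\}_{n\ge 1}$; for concreteness take the two-bridge knots $C_n={\bf b}(2n+\tfrac12)$ of Proposition~\ref{prop4.1} (these are pairwise distinct, since their determinants $4n+1$ are distinct). Let $B_1$ be the non-classical welded knot of Example~\ref{exmpl1}, for which $ut(B_1)=1$, and set $K_n := B_1\,\#\,C_n$, the welded connected sum obtained by splicing a fixed diagram of $B_1$ into an arc of a diagram of $C_n$. The trivializing sequence of Example~\ref{exmpl1} is supported in a disk that meets the $C_n$-part of the diagram only along the splicing arc, so it may be performed inside the $B_1$-tangle of $K_n$; a single twist move followed by welded Reidemeister moves collapses that tangle to a trivial arc and leaves a diagram of $C_n$. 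Hence $d_T(K_n,C_n)\le 1$.

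For the reverse inequality it suffices that $K_n$ is not welded-equivalent to $C_n$, and since $C_n$ is classical this is immediate once we know $K_n$ is \emph{non-classical} (which also supplies the required non-classical vertex). Granting non-classicality, $d_T(K_n,C_n)=1$, so $\{K_n,C_n\}$ spans a $1$-simplex of $\mathcal{G}_T$ whose vertices are a non-classical welded knot and a non-trivial classical knot. These simplexes are pairwise distinct: the $C_n$ are pairwise non-isotopic, hence pairwise welded-inequivalent by~\cite[Theorem~2.1]{Winter}, and because each pair contains exactly one classical and one non-classical vertex, an equality $\{K_n,C_n\}=\{K_m,C_m\}$ would force $C_n=C_m$, i.e.\ $n=m$. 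Thus $\mathcal{G}_T$ contains infinitely many $1$-simplexes of the asserted type.

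The crux, and the step I expect to be the main obstacle, is proving that each $K_n$ is non-classical. I would use the Alexander invariant coming from the Alexander quandle of Section~5. For a classical knot the Alexander polynomial is symmetric, $\Delta(t)\doteq\Delta(t^{-1})$, whereas the welded Alexander polynomial need not be; and $\Delta$ is multiplicative under connected sum, so $\Delta_{K_n}\doteq\Delta_{B_1}\cdot\Delta_{C_n}$ with $\Delta_{C_n}$ symmetric. Working over the integral domain $\Lambda_p$, a non-symmetric factor times a symmetric one stays non-symmetric, so the whole problem reduces to the single computation that $\Delta_{B_1}$ is \emph{not} symmetric; once that is checked, $\Delta_{K_n}$ is non-symmetric and $K_n$ cannot be classical. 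The remaining points, namely that the local twist inside the $B_1$-tangle really returns $C_n$ and that $\Delta$ is multiplicative under welded connected sum, are routine.
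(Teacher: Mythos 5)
Your construction (splice the one-twist-unknottable welded knot $B_1$ of Example~\ref{exmpl1} into each two-bridge knot $C_n={\bf b}(2n+\tfrac12)$ and undo the $B_1$-tangle with a single twist move) is genuinely different from the paper's proof, which instead takes the family $A_{2n}$ of Fig.~\ref{K2N} -- already proven non-classical in~\cite{n} by fundamental-group computations -- and shows that one twist move turns $A_{2n}$ into the classical torus knot $T(2,2n-1)$. The first half of your argument (that $d_T(K_n,C_n)\le 1$, and that distinct $n$ give distinct simplexes) is sound. But the step you yourself flag as the crux -- non-classicality of $K_n$ via non-symmetry of $\Delta_{B_1}$ -- fails, and it fails for a concrete computational reason, not merely for lack of verification.

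Reading the diagram of $B_1$ in Fig.~\ref{utk}, its three classical crossings give the Alexander-quandle relations $A=tC+(1-t)B$, $B=tC+(1-t)A$, $A=tB+(1-t)C$ on the three arcs, i.e.\ the relation matrix
$$M=\begin{pmatrix} -1 & 1-t & t\\ 1-t & -1 & t\\ -1 & t & 1-t \end{pmatrix}.$$
Its $2\times 2$ minors generate the first elementary ideal $E_1(B_1)=\bigl(t^2-2t,\;1-2t,\;t^2-t+1\bigr)$. Since $2(t-2)+(1-2t)=-3$, any common divisor of these generators divides $3$ and hence is a unit, so the gcd -- that is, the one-variable Alexander polynomial of $B_1$ -- is $1$, which is symmetric; the same happens over every $\Lambda_p$ (over $\Lambda_3$ the gcd is $t+1$, again symmetric). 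So the obstruction you propose to propagate through the connected sum vanishes identically, and your reduction ``it suffices to check $\Delta_{B_1}$ is not symmetric'' is a dead end. What \emph{is} true, and what would repair your proof, is that $E_1(B_1)$ is not principal: all three generators lie in the maximal ideal $(3,\,t+1)$, yet their gcd is $1$, so no single generator exists; since classical knots always have principal first Alexander ideal, $B_1$ is non-classical, and because elementary ideals multiply under the direct-sum decomposition of the Alexander module of a connected sum, $E_1(K_n)=E_1(B_1)\cdot(\Delta_{C_n})$ is again non-principal, giving non-classicality of every $K_n$. (Consistency check: reducing $E_1(B_1)$ mod $3$ at $t=-1$ recovers the nontrivial $R_3$-colorings of $B_1$ used in~\cite{unknottingweldedLi2017}.) Alternatively you could simply do what the paper does and start from knots whose non-classicality is already on record, such as $A_{2n}$ from~\cite{n}; either way, the argument as you wrote it does not go through.
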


\begin{proof}
Consider the family of welded knots $A_{2n}$ shown in Fig.~\ref{K2N} with classical crossings numerated by $1, 2, \ldots, 2n$. 
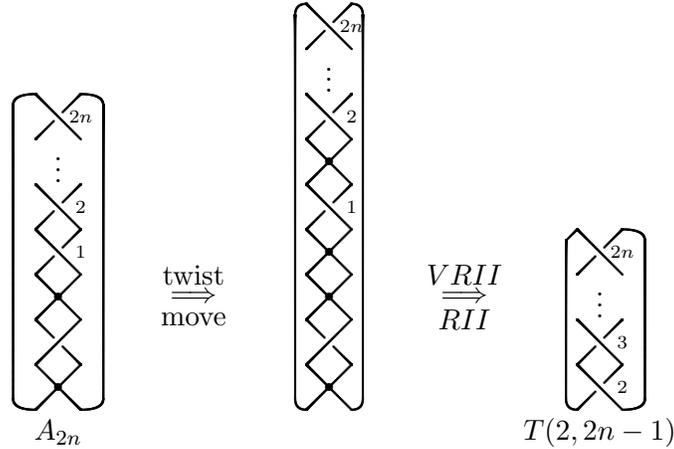
\begin{figure}[!ht]
\centering 
\unitlength=0.3mm
\begin{picture}(0,190)(0,-60)
\thicklines
\qbezier(-130,-50)(-130,-50)(-110,-30)
\qbezier(-110,-50)(-110,-50)(-130,-30) 
\put(-120,-40){\circle*{4}}
\qbezier(-130,-30)(-130,-30)(-110,-10)
\qbezier(-130,-10)(-130,-10)(-122,-18) 
\qbezier(-110,-30)(-110,-30)(-118,-22)
\qbezier(-130,10)(-130,10)(-110,-10)
\qbezier(-130,-10)(-130,-10)(-110,10) 
\put(-120,0){\circle*{4}}
\qbezier(-130,30)(-130,30)(-110,10)
\qbezier(-130,10)(-130,10)(-122,18) 
\qbezier(-110,30)(-110,30)(-118,22)
\qbezier(-130,50)(-130,50)(-110,30)
\qbezier(-130,30)(-130,30)(-122,38) 
\qbezier(-110,50)(-110,50)(-118,42)
\put(-120,60){\makebox(0,0)[c,c]{$\vdots$}}
\qbezier(-130,90)(-130,90)(-110,70)
\qbezier(-130,70)(-130,70)(-122,78) 
\qbezier(-110,90)(-110,90)(-118,82)
\qbezier(-130,90)(-140,90)(-140,85)
\qbezier(-140,85)(-140,85)(-140,-45)
\qbezier(-140,-45)(-140,-50)(-130,-50)
\qbezier(-110,90)(-100,90)(-100,85)
\qbezier(-100,85)(-100,85)(-100,-45) 
\qbezier(-100,-45)(-100,-50)(-110,-50)
\put(-120,-60){\makebox(0,0)[c,c]{$A_{2n}$}}
\put(-110,80){\makebox(0,0)[c,c]{\tiny $2n$}}
\put(-110,40){\makebox(0,0)[c,c]{\tiny $2$}}
\put(-110,20){\makebox(0,0)[c,c]{\tiny $1$}}
\put(-60,0){\makebox(0,0)[c,c]{$\Longrightarrow$}}
\put(-60,10){\makebox(0,0)[c,c]{twist}}
\put(-60,-10){\makebox(0,0)[c,c]{move}}
\qbezier(-10,-50)(-10,-50)(10,-30)
\qbezier(-10,-30)(-10,-30)(10,-50) 
\put(0,-40){\circle*{4}}
\qbezier(-10,-30)(-10,-30)(10,-10)
\qbezier(-10,-10)(-10,-10)(-2,-18) 
\qbezier(10,-30)(10,-30)(2,-22)
\qbezier(-10,10)(-10,10)(10,-10)
\qbezier(-10,-10)(-10,-10)(10,10) 
\put(0,0){\circle*{4}}
\qbezier(-10,30)(-10,30)(10,10)
\qbezier(-10,10)(-10,10)(10,30) 
\put(0,20){\circle*{4}}
\qbezier(-10,50)(-10,50)(10,30)
\qbezier(-10,30)(-10,30)(-2,38) 
\qbezier(10,50)(10,50)(2,42)
\qbezier(-10,70)(-10,70)(10,50)
\qbezier(-10,50)(-10,50)(10,70) 
\put(0,60){\circle*{4}}
\qbezier(-10,90)(-10,90)(10,70)
\qbezier(-10,70)(-10,70)(-2,78) 
\qbezier(10,90)(10,90)(2,82)
\put(0,100){\makebox(0,0)[c,c]{$\vdots$}}
\qbezier(-10,130)(-10,130)(10,110)
\qbezier(-10,110)(-10,110)(-2,118) 
\qbezier(10,130)(10,130)(2,122)
\put(10,120){\makebox(0,0)[c,c]{\tiny $2n$}}
\put(10,80){\makebox(0,0)[c,c]{\tiny $2$}}
\put(10,40){\makebox(0,0)[c,c]{\tiny $1$}}
\put(60,0){\makebox(0,0)[c,c]{$\Longrightarrow$}}
\put(60,10){\makebox(0,0)[c,c]{$VRII$}}
\put(60,-10){\makebox(0,0)[c,c]{$RII$}}
\qbezier(-10,130)(-15,130)(-15,125)
\qbezier(-15,125)(-15,125)(-15,-45)
\qbezier(-15,-45)(-15,-50)(-10,-50)
\qbezier(10,130)(15,130)(15,125)
\qbezier(15,125)(15,125)(15,-45)
\qbezier(15,-45)(15,-50)(10,-50)
\qbezier(110,-30)(110,-30)(130,-50)
\qbezier(110,-50)(110,-50)(118,-42) 
\qbezier(130,-30)(130,-30)(122,-38)
\qbezier(110,-10)(110,-10)(130,-30)
\qbezier(110,-30)(110,-30)(118,-22) 
\qbezier(130,-10)(130,-10)(122,-18)
\put(120,0){\makebox(0,0)[c,c]{$\vdots$}}
\qbezier(110,30)(110,30)(130,10)
\qbezier(110,10)(110,10)(118,18) 
\qbezier(130,30)(130,30)(122,22)
\put(130,20){\makebox(0,0)[c,c]{\tiny $2n$}}
\put(130,-20){\makebox(0,0)[c,c]{\tiny $3$}}
\put(130,-40){\makebox(0,0)[c,c]{\tiny $2$}}
\qbezier(130,30)(140,30)(140,25)
\qbezier(130,-50)(140,-50)(140,-45)
\qbezier(140,25)(140,25)(140,-45)
\qbezier(110,30)(110,30)(105,25)
\qbezier(110,-50)(105,-50)(105,-45)
\qbezier(105,25)(105,25)(105,-45)
\put(120,-60){\makebox(0,0)[c,c]{$T(2,2n-1)$}}
\end{picture}
\caption{From welded knot $A_{2n}$ to torus knot $T_{2n-1}$.} \label{K2N}
\end{figure}
By computing the commutators of the fundamental groups of $A_{2n}$ it was proved in~\cite{n} that all $A_{2n}$ are distinct and non-classical welded knots. Consider the welded knot obtained by applying one twist move at crossing number ``$1$' 'in $A_{2n}$. The obtained welded knot can be simplified by moves VRII and RII. Then we will get a classical knot with crossings numerated by $2, 3, \ldots, 2n$ which is the torus knot $T(2, 2n-1)$. Welded knot $A_{2n}$ is not classical, but $T(2, 2n-1)$ is classical knot. Therefore they are distinct. And $T_{2n-1}$ is obtained from $A_{2n}$ by using a twist move,  hence $d_T(A_{2n},T_{2n-1})=1$. Therefore for each $n \in \mathbb{N}$ the pair $\{A_{2n},T_{2n-1}\}$ spans 1-simplex in $\mathcal{G}_T$.  
\end{proof} 

We say two welded knots $\{K,K'\}$ are connected by a path of length $n$ in $\mathcal{G}_T$ if there exists welded knots $\{K_0 ,K_1, \ldots, K_n\}$ such that $d_T(K_i,K_{i+1})=1$ for each $i=0,1,\ldots, n-1$ and $K_0=K, K_n= K'$. The minimum length among all the paths between $K$ and $K'$ gives the distance $d_T(K,K')$. Since twist move is an unknotting operation, there exist a path between any two distinct welded knots and hence $\mathcal{G}_T$ is connected complex. We show the existence of an infinite length path in $\mathcal{G}_T$ by providing an infinite family of welded knots $\{K_1, K_2, \ldots \}$ such that $d_{T}(K_i,K_{i+1}) = 1$ for all $i \geq 1$.

\begin{proposition}
There exists a family $\{B_1,  B_2, \ldots \}$ of distinct welded knots in $\mathcal{G}_T$ such that for all $i=1,2,\ldots$ the distance $d_{T} (B_i, B_{i+1}) = 1$.
\end{proposition}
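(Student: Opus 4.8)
The plan is to use the family $\{B_n\}_{n\geq 1}$ already introduced before Lemma~\ref{lemma3}, with the diagrams $D_n$ of Fig.~\ref{4ap26}; recall that $B_n$ is the connected sum of $n$ copies of $B_1$ and that $D_n$ is built from $n$ identical blocks, each carrying three classical and two welded crossings. Two things have to be verified: that the $B_i$ are pairwise distinct welded knots, and that $d_T(B_i,B_{i+1})=1$ for every $i$.

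Distinctness is immediate from Lemma~\ref{lemma3}. Since $ut$ is a welded knot invariant and $ut(B_n)=n$, an equality $B_i=B_j$ would force $i=ut(B_i)=ut(B_j)=j$. Hence $B_i\neq B_j$ whenever $i\neq j$; in particular $B_i\neq B_{i+1}$, so that $d_T(B_i,B_{i+1})\geq 1$.

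It then remains to show $d_T(B_i,B_{i+1})\leq 1$, i.e.\ that one twist move carries $B_{i+1}$ to $B_i$ up to welded isotopy. Here I would localize the argument of Example~\ref{exmpl1}. In the diagram $D_{i+1}$, apply the twist move $T_2$ to a single block in exactly the position used in Example~\ref{exmpl1}, and then perform the same welded Reidemeister moves (VRII and RII) used there. Just as in Example~\ref{exmpl1}, this collapses that one block to a trivial strand, while the remaining $i$ blocks are left untouched and form precisely the diagram $D_i$ of $B_i$. Thus a single twist move together with welded Reidemeister moves takes $B_{i+1}$ to $B_i$, giving $d_T(B_i,B_{i+1})\leq 1$. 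Combining the two bounds yields $d_T(B_i,B_{i+1})=1$, so $\{B_1,B_2,\dots\}$ realizes an infinite-length path in $\mathcal{G}_T$.

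The step I expect to be the main obstacle is this locality claim: one must check that carrying out the twist move and the subsequent simplifications inside one block does not interfere with the other $i$ blocks, so that the outcome really is $D_i$ and not some other diagram. This is a tangle-level reformulation of Example~\ref{exmpl1} combined with the single-block reduction already used in the proof of Lemma~\ref{lemma3}, and it can be made precise by performing all the moves within a disk neighbourhood of the chosen block.
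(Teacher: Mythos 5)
Your proposal is correct and follows essentially the same route as the paper's own proof: the paper uses the same family $\{B_n\}$, derives distinctness from $ut(B_n)=n$ (Lemma~\ref{lemma3}), and establishes $d_T(B_n,B_{n+1})=1$ by applying one twist move inside a single block of $B_{n+1}$ and collapsing it with VRII and RII moves (Fig.~\ref{INFPATH}). The locality point you flag is handled in the paper exactly as you suggest, by exhibiting the moves diagrammatically within the affected block while the other blocks are untouched.
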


\begin{proof}
Let $\{B_n\}_{n\geq 1}$ be the family of welded knots given by Fig.~\ref{INFPATH}. As shown in Lemma~\ref{lemma3}, for each $n=1,2,\ldots$ twist number is known: $ut(B_n)= n$ thus, all the $\{B_n\}$ are distinct welded knots. 
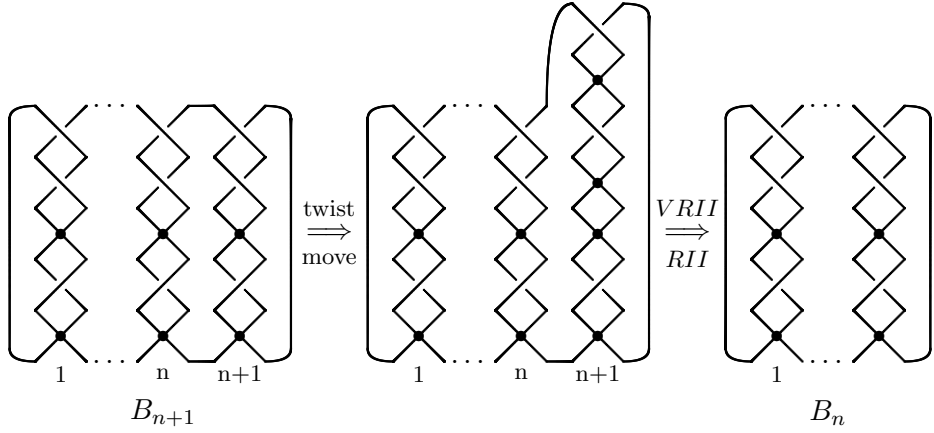
\begin{figure}[!ht]
\centering 
\unitlength=0.34mm
\begin{picture}(360,160)(-190,-70)
\thicklines
\qbezier(-180,-50)(-180,-50)(-160,-30)
\qbezier(-160,-50)(-160,-50)(-180,-30) 
\put(-170,-40){\circle*{4}}
\qbezier(-180,-30)(-180,-30)(-160,-10)
\qbezier(-180,-10)(-180,-10)(-172,-18) 
\qbezier(-160,-30)(-160,-30)(-168,-22)
\qbezier(-180,10)(-180,10)(-160,-10)
\qbezier(-180,-10)(-180,-10)(-160,10) 
\put(-170,0){\circle*{4}}
\qbezier(-180,30)(-180,30)(-160,10)
\qbezier(-180,10)(-180,10)(-172,18) 
\qbezier(-160,30)(-160,30)(-168,22)
\qbezier(-180,50)(-180,50)(-160,30)
\qbezier(-180,30)(-180,30)(-172,38) 
\qbezier(-160,50)(-160,50)(-168,42)
\qbezier(-180,50)(-190,50)(-190,45)
\qbezier(-190,45)(-190,45)(-190,-45)
\qbezier(-190,-45)(-190,-50)(-180,-50)
\put(-150,50){\makebox(0,0)[c,c]{$\dots$}}
\put(-150,-50){\makebox(0,0)[c,c]{$\dots$}}
\put(-170,-55){\makebox(0,0)[c,c]{\footnotesize 1}}
\qbezier(-140,-50)(-140,-50)(-120,-30)
\qbezier(-140,-30)(-140,-30)(-120,-50) 
\put(-130,-40){\circle*{4}}
\qbezier(-140,-30)(-140,-30)(-120,-10)
\qbezier(-140,-10)(-140,-10)(-132,-18) 
\qbezier(-120,-30)(-120,-30)(-128,-22)
\qbezier(-140,10)(-140,10)(-120,-10)
\qbezier(-140,-10)(-140,-10)(-120,10) 
\put(-130,0){\circle*{4}}
\qbezier(-140,30)(-140,30)(-120,10)
\qbezier(-140,10)(-140,10)(-132,18) 
\qbezier(-120,30)(-120,30)(-128,22)
\qbezier(-140,50)(-140,50)(-120,30)
\qbezier(-140,30)(-140,30)(-132,38) 
\qbezier(-120,50)(-120,50)(-128,42)
\qbezier(-120,50)(-110,50)(-110,50)
\qbezier(-120,-50)(-110,-50)(-110,-50)
\put(-130,-55){\makebox(0,0)[c,c]{\footnotesize n}}
\qbezier(-110,-50)(-110,-50)(-90,-30)
\qbezier(-110,-30)(-110,-30)(-90,-50) 
\put(-100,-40){\circle*{4}}
\qbezier(-110,-30)(-110,-30)(-90,-10)
\qbezier(-110,-10)(-110,-10)(-102,-18) 
\qbezier(-90,-30)(-90,-30)(-98,-22)
\qbezier(-110,10)(-110,10)(-90,-10)
\qbezier(-110,-10)(-110,-10)(-90,10) 
\put(-100,0){\circle*{4}}
\qbezier(-110,30)(-110,30)(-90,10)
\qbezier(-110,10)(-110,10)(-102,18) 
\qbezier(-90,30)(-90,30)(-98,22)
\qbezier(-110,50)(-110,50)(-90,30)
\qbezier(-110,30)(-110,30)(-102,38) 
\qbezier(-90,50)(-90,50)(-98,42)
\put(-100,-55){\makebox(0,0)[c,c]{\footnotesize n+1}}
\qbezier(-90,50)(-80,50)(-80,45)
\qbezier(-80,45)(-80,45)(-80,-45) 
\qbezier(-80,-45)(-80,-50)(-90,-50)
\qbezier(-40,-50)(-40,-50)(-20,-30)
\qbezier(-20,-50)(-20,-50)(-40,-30) 
\put(-30,-40){\circle*{4}}
\qbezier(-40,-30)(-40,-30)(-20,-10)
\qbezier(-40,-10)(-40,-10)(-32,-18) 
\qbezier(-20,-30)(-20,-30)(-28,-22)
\qbezier(-40,10)(-20,-10)(-20,-10)
\qbezier(-40,-10)(-20,10)(-20,10) 
\put(-30,0){\circle*{4}}
\qbezier(-40,30)(-40,30)(-20,10)
\qbezier(-40,10)(-40,10)(-32,18) 
\qbezier(-20,30)(-20,30)(-28,22)
\qbezier(-40,50)(-40,50)(-20,30)
\qbezier(-40,30)(-40,30)(-32,38) 
\qbezier(-20,50)(-20,50)(-28,42)
\qbezier(-40,50)(-50,50)(-50,45)
\qbezier(-50,45)(-50,45)(-50,-45)
\qbezier(-50,-45)(-50,-50)(-40,-50)
\put(-10,50){\makebox(0,0)[c,c]{$\dots$}}
\put(-10,-50){\makebox(0,0)[c,c]{$\dots$}}
\put(-30,-55){\makebox(0,0)[c,c]{\footnotesize 1}}
\qbezier(0,-50)(0,-50)(20,-30)
\qbezier(0,-30)(0,-30)(20,-50) 
\put(10,-40){\circle*{4}}
\qbezier(0,-30)(0,-30)(20,-10)
\qbezier(0,-10)(0,-10)(8,-18) 
\qbezier(20,-30)(20,-30)(12,-22)
\qbezier(0,10)(0,10)(20,-10)
\qbezier(0,-10)(0,-10)(20,10) 
\put(10,0){\circle*{4}}
\qbezier(0,30)(0,30)(20,10)
\qbezier(0,10)(0,10)(8,18) 
\qbezier(20,30)(20,30)(12,22)
\qbezier(0,50)(0,50)(20,30)
\qbezier(0,30)(0,30)(8,38) 
\qbezier(20,50)(20,50)(12,42)
\qbezier(20,50)(20,90)(30,90)
\qbezier(20,-50)(30,-50)(30,-50)
\put(10,-55){\makebox(0,0)[c,c]{\footnotesize n}}
\qbezier(30,-50)(30,-50)(50,-30)
\qbezier(30,-30)(30,-30)(50,-50) 
\put(40,-40){\circle*{4}}
\qbezier(30,-30)(30,-30)(50,-10)
\qbezier(30,-10)(30,-10)(38,-18) 
\qbezier(50,-30)(50,-30)(42,-22)
\qbezier(30,10)(30,10)(50,-10)
\qbezier(30,-10)(30,-10)(50,10) 
\put(40,0){\circle*{4}}
\qbezier(30,30)(30,30)(50,10)
\qbezier(30,10)(30,10)(50,30) 
\put(40,20){\circle*{4}}
\qbezier(30,50)(30,50)(50,30)
\qbezier(30,30)(30,30)(38,38) 
\qbezier(50,50)(50,50)(42,42)
\qbezier(30,70)(30,70)(50,50)
\qbezier(30,50)(30,50)(50,70) 
\put(40,60){\circle*{4}}
\qbezier(30,90)(30,90)(50,70)
\qbezier(30,70)(30,70)(38,78) 
\qbezier(50,90)(50,90)(42,82)
\put(40,-55){\makebox(0,0)[c,c]{\footnotesize n+1}}
\qbezier(50,90)(60,90)(60,85)
\qbezier(60,85)(60,85)(60,-45) 
\qbezier(60,-45)(60,-50)(50,-50)
\qbezier(100,-50)(90,-50)(90,-45)
\qbezier(90,-45)(90,-45)(90,45) 
\qbezier(90,45)(90,50)(100,50) 
\qbezier(100,-50)(100,-50)(120,-30)
\qbezier(100,-30)(100,-30)(120,-50) 
\put(110,-40){\circle*{4}}
\qbezier(100,-30)(100,-30)(120,-10)
\qbezier(100,-10)(100,-10)(108,-18) 
\qbezier(120,-30)(120,-30)(112,-22)
\qbezier(100,10)(100,10)(120,-10)
\qbezier(100,-10)(100,-10)(120,10) 
\put(110,0){\circle*{4}}
\qbezier(100,30)(100,30)(120,10)
\qbezier(100,10)(100,10)(108,18) 
\qbezier(120,30)(120,30)(112,22)
\qbezier(100,50)(100,50)(120,30)
\qbezier(100,30)(100,30)(108,38) 
\qbezier(120,50)(120,50)(112,42)
\put(110,-55){\makebox(0,0)[c,c]{\footnotesize 1}}
\put(130,50){\makebox(0,0)[c,c]{$\dots$}}
\put(130,-50){\makebox(0,0)[c,c]{$\dots$}}
\qbezier(140,-50)(140,-50)(160,-30)
\qbezier(140,-30)(140,-30)(160,-50) 
\put(150,-40){\circle*{4}}
\qbezier(140,-30)(140,-30)(160,-10)
\qbezier(140,-10)(140,-10)(148,-18) 
\qbezier(160,-30)(160,-30)(152,-22)
\qbezier(140,10)(140,10)(160,-10)
\qbezier(140,-10)(140,-10)(160,10) 
\put(150,0){\circle*{4}}
\qbezier(140,30)(140,30)(160,10)
\qbezier(140,10)(140,10)(148,18) 
\qbezier(160,30)(160,30)(152,22)
\qbezier(140,50)(140,50)(160,30)
\qbezier(140,30)(140,30)(148,38) 
\qbezier(160,50)(160,50)(152,42)
\qbezier(160,50)(170,50)(170,45)
\qbezier(170,45)(170,45)(170,-45) 
\qbezier(170,-45)(170,-50)(160,-50)
\put(-130,-70){\makebox(0,0)[c,c]{$B_{n+1}$}}
\put(130,-70){\makebox(0,0)[c,c]{$B_{n}$}}
\put(-65,0){\makebox(0,0)[c,c]{$\Longrightarrow$}}
\put(-65,10){\makebox(0,0)[c,c]{\footnotesize twist}}
\put(-65,-10){\makebox(0,0)[c,c]{\footnotesize move}}
\put(75,0){\makebox(0,0)[c,c]{$\Longrightarrow$}}
\put(75,10){\makebox(0,0)[c,c]{\footnotesize $VRII$}}
\put(75,-10){\makebox(0,0)[c,c]{\footnotesize $RII$}}
\end{picture}
\caption{The twist move from $B_{n+1}$ to $B_{n}$.} \label{INFPATH}
\end{figure}
Observe from the Fig.~\ref{INFPATH} that each $B_n$ can be obtained from $B_{n+1}$ by using one twist move and  welded Reidemeister moves VRII and RII. Therefore, $d_T(B_n,  B_{n+1})=1$ for $n=1,2,\ldots$ as required.
\end{proof}

For the family $\{B_n\}_{n\geq 1}$ given in Fig.~\ref{INFPATH}, if $m\neq n$ then it is possible to convert $B_m$ into $B_n$ using $\mid m-n \mid$ twist moves, i.e , $d_T (B_m, B_n)\leq \mid m-n\mid$ with equality being still unknown. 

For a Gordian complex of knots it is interesting to ask the question that whether it contains an $n$-simplex of each dimension $n \in \mathbb{N}$ or not, there are examples of Gordian complexes with answer to this question in yes as well as no for different local moves. If it is possible to construct an infinite family of knots $\{K_n\}_{n\geq 0}$ such that distance by corresponding local move $d(K_m,K_n)=1$ for distinct $m$ and $n$ then $\sigma_n = \{K_0, K_1, \ldots, K_n \}$ forms an $n$-simplex.
We propose a family $\{WK_n\}_{n\geq 0}$ of welded knots which might be such an example in the case of twist move, however it is not yet known to us whether $WK_n$ are all distinct welded knots.

\begin{theorem} \label{thm6.1} 
The Gordian complex $\mathcal{G}_T$ contains an infinite family of welded knots $\{WK_n\}_{n\geq 0}$ satisfying $d_T(WK_m,WK_n)\leq 1$ for distinct integer $m,n \geq 0$.
\end{theorem}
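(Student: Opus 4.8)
The plan is to produce an explicit infinite family $\{WK_n\}_{n\ge 0}$ of welded knots together with, for every pair $m\neq n$, a single classical crossing at which one twist move --- followed only by welded Reidemeister moves --- carries $WK_m$ to $WK_n$; this immediately yields $d_T(WK_m,WK_n)\le 1$, and letting $m,n$ range over all nonnegative integers gives the family.

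First I would fix a welded knot diagram $E$ containing one distinguished classical crossing $c$ together with an auxiliary arc $A$, and define $WK_n$ to be the diagram in which a strand $s$ passes under $c$ and then winds $n$ times around $A$ by welded crossings before closing up. All the $WK_n$ then agree outside a disk $\Delta$ and differ inside $\Delta$ only in the integer $n$. The essential design feature is that this welded winding of $s$ is \emph{trapped}: the loops of $s$ around $A$ lie entirely on one side of the classical crossing $c$, and since a detour move may only slide an arc carrying welded crossings and may never push such an arc across a classical crossing, the winding cannot be altered by welded Reidemeister moves alone. This obstruction is precisely what makes distinct values of $n$ plausibly yield distinct welded knots.

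Next, for arbitrary $m\neq n$, I would verify the single-move transition directly on the local pictures. Performing one twist move at $c$ in $WK_m$ replaces $c$ by the welded-classical-welded stack of Fig.~\ref{wc}; the two new welded crossings open a gate through which the formerly trapped strand $s$ may now be detoured. Using only the detour move and the remaining welded Reidemeister moves, one then unwinds the $m$ loops, re-winds $n$ loops around $A$ on the correct side of the new classical barrier, and finally absorbs the two auxiliary welded crossings created by the twist, restoring a single classical crossing at $c$ of its original type (here I use that a twist move preserves the sign of $c$). A figure-level inspection entirely analogous to Figs.~\ref{INFPATH} and \ref{K2N} then identifies the resulting diagram as $WK_n$; since exactly one twist move was spent, $d_T(WK_m,WK_n)\le 1$.

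The main obstacle is to arrange this so that the single twist move works \emph{uniformly} over all pairs rather than only for consecutive values of $n$: the reduction from $m$ loops to $n$ loops must be realized by welded moves after just one twist, which is exactly where the liberality of the welded detour move is indispensable, and the local check must confirm that no second twist is ever forced. I would also stress what the argument deliberately does \emph{not} establish. Because the trapped winding is only conjecturally a genuine welded invariant, I cannot certify that the $WK_n$ are pairwise distinct, so the conclusion is stated as $d_T(WK_m,WK_n)\le 1$ rather than $=1$; upgrading this to equality, and hence to a genuine infinite-dimensional simplex of $\mathcal{G}_T$, is the part I would leave open.
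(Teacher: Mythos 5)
Your construction has two genuine gaps, and both are fatal as written. The first is the ``trapping'' mechanism on which everything rests. You assert that a detour move may never push an arc carrying welded crossings across a classical crossing. This is the opposite of how the detour move works (Fig.~\ref{DETOUR}): any segment of the diagram whose crossings are all welded may be erased and redrawn, with the same endpoints, in an \emph{arbitrary} position in the plane, and every intersection of the redrawn segment with the rest of the diagram --- including with the strands of classical crossings --- is simply marked welded. Classical crossings therefore do not trap purely welded winding. In your diagrams the $m$ loops of $s$ around $A$ carry only welded crossings, so they can be unwound (and rewound to any other number) by detour moves alone, with no twist move at all. Hence all your $WK_n$ are the same welded knot: the inequality $d_T(WK_m,WK_n)\leq 1$ holds vacuously with distance $0$, and the family carries no content --- one may as well take every $WK_n$ to be the unknot.

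The second gap is the closing step of your transition: after a single twist move at $c$, the over/under data at $c$ is exchanged and two welded crossings appear (Fig.~\ref{wc}), and you claim these can be ``absorbed'' so as to restore a single classical crossing at $c$ of its original type by welded Reidemeister moves. (Preservation of the \emph{sign} of $c$ does not help; the twist changes which strand passes over.) If such an absorption were possible, the twist move itself would be a consequence of welded Reidemeister moves and so could never change a welded knot type, contradicting Lemma~\ref{lemma3}, which gives $ut(B_n)=n\geq 1$. This is precisely the difficulty the paper's construction is engineered to avoid: there $WK_n$ is a concatenation of $n$ blocks (Fig.~\ref{WKN}), each containing a $2$-braid with two classical and two welded crossings, and one twist at a classical crossing of block $m+1$ allows that crossing to be \emph{cancelled} against the block's other classical crossing via VRII and RII (Fig.~\ref{2BRAID}); the trivialization then cascades through blocks $m+1,\dots,n$, collapsing the diagram to $WK_m$. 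Nothing is rewound and no crossing is restored, which is why exactly one twist move suffices uniformly for every pair $m<n$; and the surviving blocks' classical crossings, which detour moves cannot remove, are what make the distinctness of the $WK_n$ at least plausible --- though that distinctness is left open by the paper, just as in your write-up.
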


\begin{proof}
Denote by $WK_0$ the trivial welded knot and by $WK_n$, $n \geq 1, $ the welded knot given in Fig.~\ref{WKN}. Observe that $WK_n$ is obtained from $WK_{n-1}$ by adding an adjacent block indicated by two vertical dashed lines. Each such block contains a horizontal braid with two classical and two welded crossings.
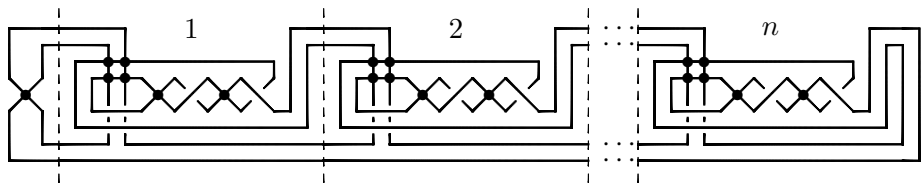
\begin{figure}[ht]
\unitlength=.22mm
\centering 
\begin{picture}(600,100)(-260,-5)
\put(-300,0){\begin{picture}(0,0)
{\thinlines
\qbezier(90,-13)(90,-13)(90,-8)
\qbezier(90,-3)(90,-3)(90,2)
\qbezier(90,7)(90,7)(90,12)
\qbezier(90,17)(90,17)(90,22)
\qbezier(90,27)(90,27)(90,32)
\qbezier(90,37)(90,37)(90,42)
\qbezier(90,47)(90,47)(90,52)
\qbezier(90,57)(90,57)(90,62)
\qbezier(90,67)(90,67)(90,72)
\qbezier(90,77)(90,77)(90,82)
\qbezier(90,87)(90,87)(90,92)
}
{\thinlines
\qbezier(250,-13)(250,-13)(250,-8)
\qbezier(250,-3)(250,-3)(250,2)
\qbezier(250,7)(250,7)(250,12)
\qbezier(250,17)(250,17)(250,22)
\qbezier(250,27)(250,27)(250,32)
\qbezier(250,37)(250,37)(250,42)
\qbezier(250,47)(250,47)(250,52)
\qbezier(250,57)(250,57)(250,62)
\qbezier(250,67)(250,67)(250,72)
\qbezier(250,77)(250,77)(250,82)
\qbezier(250,87)(250,87)(250,92)
\put(170,80){\makebox(0,0)[cc]{$1$}}
}
\thicklines
\qbezier(60,0)(60,0)(90,0)
\qbezier(60,0)(60,0)(60,30)
\qbezier(60,50)(60,50)(60,80)
\qbezier(60,80)(60,80)(90,80)
\qbezier(60,30)(60,30)(80,50)
\qbezier(60,50)(60,50)(80,30)
\qbezier(80,50)(80,50)(80,70)
\qbezier(80,70)(80,70)(90,70)
\qbezier(80,30)(80,30)(80,10)
\qbezier(80,10)(80,10)(90,10)
\put(70,40){\circle*{6}}
\qbezier(90,0)(90,0)(250,0)
\qbezier(90,10)(90,10)(120,10)
\qbezier(120,10)(120,10)(120,16)
\qbezier(120,24)(120,24)(120,26)
\qbezier(120,34)(120,34)(120,70)
\qbezier(120,70)(120,70)(90,70)
\qbezier(80,80)(80,80)(130,80)
\qbezier(130,80)(130,80)(130,34)
\qbezier(130,26)(130,26)(130,24)
\qbezier(130,16)(130,16)(130,10)
\qbezier(130,10)(130,10)(250,10)
\qbezier(100,20)(100,20)(100,60)
\qbezier(100,20)(100,20)(240,20)
\qbezier(240,20)(240,20)(240,70)
\qbezier(240,70)(240,70)(250,70)
\qbezier(100,20)(100,20)(100,60)
\qbezier(100,60)(100,60)(220,60)
\qbezier(110,30)(110,30)(110,50)
\qbezier(110,30)(110,30)(140,30)
\qbezier(110,50)(110,50)(140,50)
\put(120,50){\circle*{6}}
\put(120,60){\circle*{6}}
\put(130,50){\circle*{6}}
\put(130,60){\circle*{6}}
\qbezier(140,30)(140,30)(160,50)
\qbezier(140,50)(140,50)(160,30)
\qbezier(160,30)(160,30)(180,50)
\qbezier(160,50)(160,50)(166,44)
\qbezier(174,36)(180,30)(180,30)
\qbezier(180,30)(180,30)(200,50)
\qbezier(180,50)(180,50)(200,30)
\qbezier(200,50)(200,50)(220,30)
\qbezier(200,30)(200,30)(206,36)
\qbezier(214,44)(220,50)(220,50)
\put(150,40){\circle*{6}}
\put(190,40){\circle*{6}}
\qbezier(220,50)(220,50)(220,60)
\qbezier(220,30)(220,30)(230,30)
\qbezier(230,30)(230,30)(230,80)
\qbezier(230,80)(230,80)(250,80)
\end{picture}}
\put(-140,0){\begin{picture}(0,100)
\thicklines
{\thinlines
\qbezier(250,-13)(250,-13)(250,-8)
\qbezier(250,-3)(250,-3)(250,2)
\qbezier(250,7)(250,7)(250,12)
\qbezier(250,17)(250,17)(250,22)
\qbezier(250,27)(250,27)(250,32)
\qbezier(250,37)(250,37)(250,42)
\qbezier(250,47)(250,47)(250,52)
\qbezier(250,57)(250,57)(250,62)
\qbezier(250,67)(250,67)(250,72)
\qbezier(250,77)(250,77)(250,82)
\qbezier(250,87)(250,87)(250,92)
\put(170,80){\makebox(0,0)[cc]{$2$}}
}
\qbezier(90,0)(90,0)(250,0)
\qbezier(90,10)(90,10)(120,10)
\qbezier(120,10)(120,10)(120,16)
\qbezier(120,24)(120,24)(120,26)
\qbezier(120,34)(120,34)(120,70)
\qbezier(120,70)(120,70)(90,70)
\qbezier(80,80)(80,80)(130,80)
\qbezier(130,80)(130,80)(130,34)
\qbezier(130,26)(130,26)(130,24)
\qbezier(130,16)(130,16)(130,10)
\qbezier(130,10)(130,10)(250,10)
\qbezier(100,20)(100,20)(100,60)
\qbezier(100,20)(100,20)(240,20)
\qbezier(240,20)(240,20)(240,70)
\qbezier(240,70)(240,70)(250,70)
\qbezier(100,20)(100,20)(100,60)
\qbezier(100,60)(100,60)(220,60)
\qbezier(110,30)(110,30)(110,50)
\qbezier(110,30)(110,30)(140,30)
\qbezier(110,50)(110,50)(140,50)
\put(120,50){\circle*{6}}
\put(120,60){\circle*{6}}
\put(130,50){\circle*{6}}
\put(130,60){\circle*{6}}
\qbezier(140,30)(140,30)(160,50)
\qbezier(140,50)(140,50)(160,30)
\qbezier(160,30)(160,30)(180,50)
\qbezier(160,50)(160,50)(166,44)
\qbezier(174,36)(180,30)(180,30)
\qbezier(180,30)(180,30)(200,50)
\qbezier(180,50)(180,50)(200,30)
\qbezier(200,50)(200,50)(220,30)
\qbezier(200,30)(200,30)(206,36)
\qbezier(214,44)(220,50)(220,50)
\put(150,40){\circle*{6}}
\put(190,40){\circle*{6}}
\qbezier(220,50)(220,50)(220,60)
\qbezier(220,30)(220,30)(230,30)
\qbezier(230,30)(230,30)(230,80)
\qbezier(230,80)(230,80)(250,80)
\put(270,80){\makebox(0,0)[cc]{$\cdots$}}
\put(270,70){\makebox(0,0)[cc]{$\cdots$}}
\put(270,10){\makebox(0,0)[cc]{$\cdots$}}
\put(270,0){\makebox(0,0)[cc]{$\cdots$}}
\end{picture}}
\put(50,0){\begin{picture}(0,100)
{\thinlines
\qbezier(90,-13)(90,-13)(90,-8)
\qbezier(90,-3)(90,-3)(90,2)
\qbezier(90,7)(90,7)(90,12)
\qbezier(90,17)(90,17)(90,22)
\qbezier(90,27)(90,27)(90,32)
\qbezier(90,37)(90,37)(90,42)
\qbezier(90,47)(90,47)(90,52)
\qbezier(90,57)(90,57)(90,62)
\qbezier(90,67)(90,67)(90,72)
\qbezier(90,77)(90,77)(90,82)
\qbezier(90,87)(90,87)(90,92)
\put(170,80){\makebox(0,0)[cc]{$n$}}
}
\thicklines
\qbezier(90,0)(90,0)(250,0)
\qbezier(90,10)(90,10)(120,10)
\qbezier(120,10)(120,10)(120,16)
\qbezier(120,24)(120,24)(120,26)
\qbezier(120,34)(120,34)(120,70)
\qbezier(120,70)(120,70)(90,70)
\qbezier(90,80)(90,80)(130,80)
\qbezier(130,80)(130,80)(130,34)
\qbezier(130,26)(130,26)(130,24)
\qbezier(130,16)(130,16)(130,10)
\qbezier(130,10)(130,10)(250,10)
\qbezier(100,20)(100,20)(100,60)
\qbezier(100,20)(100,20)(240,20)
\qbezier(240,20)(240,20)(240,70)
\qbezier(240,70)(240,70)(250,70)
\qbezier(100,20)(100,20)(100,60)
\qbezier(100,60)(100,60)(220,60)
\qbezier(110,30)(110,30)(110,50)
\qbezier(110,30)(110,30)(140,30)
\qbezier(110,50)(110,50)(140,50)
\put(120,50){\circle*{6}}
\put(120,60){\circle*{6}}
\put(130,50){\circle*{6}}
\put(130,60){\circle*{6}}
\qbezier(140,30)(140,30)(160,50)
\qbezier(140,50)(140,50)(160,30)
\qbezier(160,30)(160,30)(180,50)
\qbezier(160,50)(160,50)(166,44)
\qbezier(174,36)(180,30)(180,30)
\qbezier(180,30)(180,30)(200,50)
\qbezier(180,50)(180,50)(200,30)
\qbezier(200,50)(200,50)(220,30)
\qbezier(200,30)(200,30)(206,36)
\qbezier(214,44)(220,50)(220,50)
\put(150,40){\circle*{6}}
\put(190,40){\circle*{6}}
\qbezier(220,50)(220,50)(220,60)
\qbezier(220,30)(220,30)(230,30)
\qbezier(230,30)(230,30)(230,80)
\qbezier(230,80)(230,80)(250,80)
\qbezier(250,10)(250,10)(250,70)
\qbezier(250,0)(250,0)(260,0)
\qbezier(250,80)(250,80)(260,80)
\qbezier(260,0)(260,0)(260,80)
\end{picture}}
\end{picture}
\caption{Welded knot $WK_{n}$.} \label{WKN} 
\end{figure}
Applying a twist move to this braid as in Fig.~\ref{2BRAID}) we will get a braid which can be simplified to a trivial two-strands braid by welded  Reidemeister moves VRII and RII. 
\begin{figure}[ht]
\unitlength=.3mm
\centering 
\begin{picture}(0,40)(0,20)
\put(-160,0){\begin{picture}(0,0)
\thicklines
\qbezier(-40,30)(-40,30)(-20,50)
\qbezier(-20,30)(-20,30)(-40,50)
\qbezier(-20,30)(-20,30)(0,50)
\qbezier(-20,50)(-20,50)(-14,44)
\qbezier(-6,36)(0,30)(0,30)
\qbezier(0,30)(0,30)(20,50)
\qbezier(0,50)(0,50)(20,30)
\qbezier(20,50)(20,50)(40,30)
\qbezier(20,30)(20,30)(26,36)
\qbezier(34,44)(40,50)(40,50)
\put(-30,40){\circle*{4}}
\put(10,40){\circle*{4}}
\end{picture} } 
\put(-90,40){\makebox(0,0)[c,c]{$\Longrightarrow$}}
\put(-90,50){\makebox(0,0)[c,c]{\footnotesize twist}}
\put(-90,30){\makebox(0,0)[c,c]{\footnotesize move}}
\put(0,0){\begin{picture}(0,0)
\thicklines
\qbezier(-60,30)(-60,30)(-40,50)
\qbezier(-40,30)(-40,30)(-60,50)
\put(-50,40){\circle*{4}}
\qbezier(-40,30)(-40,30)(-20,50)
\qbezier(-20,30)(-20,30)(-40,50)
\qbezier(-20,30)(-20,30)(0,50)
\qbezier(-20,50)(-20,50)(-14,44)
\qbezier(-6,36)(0,30)(0,30)
\qbezier(0,30)(0,30)(20,50)
\qbezier(0,50)(0,50)(20,30)
\qbezier(20,30)(20,30)(40,50)
\qbezier(20,50)(20,50)(40,30)
\put(30,40){\circle*{4}}
\qbezier(40,50)(40,50)(60,30)
\qbezier(40,30)(40,30)(46,36)
\qbezier(54,44)(60,50)(60,50)
\put(-30,40){\circle*{4}}
\put(10,40){\circle*{4}}
\end{picture} } 
\put(90,40){\makebox(0,0)[c,c]{$\Longrightarrow$}}
\put(90,50){\makebox(0,0)[c,c]{\footnotesize VRII}}
\put(90,30){\makebox(0,0)[c,c]{\footnotesize RII}}
\put(160,0){\begin{picture}(0,0)
\thicklines
\qbezier(-40,50)(-40,50)(40,50)
\qbezier(-40,30)(-40,30)(40,30) 
\end{picture} } 
\end{picture}
\caption{Applying of a twist move.} \label{2BRAID} 
\end{figure}
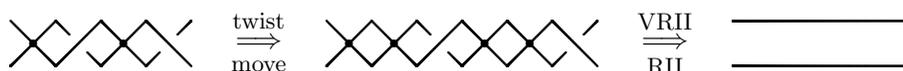
Observe that if the twist move will be applied to a 2-braid of the $m$-th block of the diagram of $WK_{n}$, $m < n$, then in all blocks with numbers $m, m+1, \ldots, n$ 2-braids will became trivial  by repeating of welded Reidemeister moves. 
As a result we obtain $WK_{m-1}$.  Hence we can obtain $WK_m$ from $WK_n$ for $m=0, 1, 2,\ldots, n-1$ by applying a twist move to the 2-braid in $(m+1)$-th block.  This implies $d_T(WK_m,WK_n)\leq 1$. Since $n$ is arbitrary it follows that for any integer distinct $n$ and $m$ we have $d_T(WK_m,WK_n)\leq 1$ as required.
\end{proof}

\begin{problem}
{\rm Are all welded knots $WK_n$ distinct?}  
\end{problem}

\begin{remark}
{\rm We observed that quandle colorings by dihedral quandle $R_n$ fails to distinguish the welded knots $WK_n$ as only possible colorings by $R_n$ happens to be trivial colorings. } 
\end{remark}



\section*{Acknowledgements}

Kaur, Gill and Prabhakar  acknowledge support from the DST project DST/INT/RUS/RSF/P-19. 
Vesnin acknowledges support by the Russian Science Foundation (project 19-41-02005) during the work on Sections 3, 4, 5 and by the Ministry of Science and Higher Education of Russia (agreement No. 075-02-2020-1479/1) for the work on Section 6. 



\begin{thebibliography}{10}

\bibitem{fenn1997braid}
R.~Fenn, R.~Rim{\'a}nyi, and C.~Rourke, \emph{The braid-permutation group},
Topology {\bf 36} (1997), no.~1, 123--135.

\bibitem{fenn1992racks}
R.~Fenn and C.~Rourke,  \emph{Racks and links in codimension two},
Journal of Knot Theory and Its Ramifications {\bf 1} (1992), no.~4, 343--406.

\bibitem{joyce1982classifying}
D.~Joyce, \emph{A classifying invariant of knots, the knot quandle},
Journal of Pure and Applied Algebra {\bf 23} (1982), no.~1, 37--65.
 
\bibitem{kauffman1999virtual}
L.~Kauffman, \emph{Virtual knot theory},  European Journal of Combinatorics {\bf 20} (1999), no.~7, 663--690. 

\bibitem{kawauchi2012survey}
A.~Kawauchi, \emph{A survey of knot theory},  Birkh{\"a}user Verlag, Basel, 1996.
 
\bibitem{unknottingweldedLi2017}
Z.~Li, F.~Lei, J.~Wu, \emph{On the unknotting number of a welded knot},
Journal of Knot Theory and Its Ramifications {\bf 26} (2017), no.~1, 1750004, 22 pp.

\bibitem{matveev1982distributive}
S.\,V.~Matveev, {\it Distributive groupoids in knot theory},
Sbornik: Mathematics {\bf 119} (1982), no.~1, 78--88.

\bibitem{Sathohwelded}
S.~Satoh, {\it Crossing changes, Delta moves, and sharp moves on welded knots},  
Rocky Mountain Journal of Mathematics {\bf 48} (2018), no.~3, 967--979. 

\bibitem{a.shimizuwarping}
A.~Shimizu, {\it The warping degree of a knot diagram}, 
Journal of Knot Theory and Its Ramifications {\bf 19}(2010), no.~7, 849--857.

\bibitem{silver2004class} 
D.\,S.~Silver and S.\,G.~Williams, {\it On a class of virtual knots with unit Jones polynomial},
 Journal of Knot Theory and Its Ramifications {\bf 13} (2004), no.~3, 367--371.

\bibitem{kanenobu2001forbidden}
T.~Kanenobu, \emph{Forbidden moves unknot a virtual knot}, 
Journal of Knot Theory and Its Ramifications \textbf{10} (2001), no.~1, 89--96.

\bibitem{nelson2001unknotting}
S.~Nelson, \emph{Unknotting virtual knots with Gauss diagram forbidden moves}, 
Journal of Knot Theory and Its Ramifications  {\bf 10} (2001), no.~6, 931--935.

\bibitem{n}
Z.~Li, F.~Lei, Z.~Chen and J.~Wu, {\it On fundamental group of a certain class of welded knots}, 
Commun. Math. Res. \textbf{33} (2017) no.~2 177--184.




\bibitem{KPV}
K.~Kaur, M.~Prabhakar, A.~Vesnin, {\it An unknotting index for virtual links}, Topology and Its Applications 
\textbf{264} (2019), 352--368. 

\bibitem{GPV} 
A.~Gill, M.~Prabhakar, A.~Vesnin, {\it Gordian complexes of knots and virtual knots given by region crossing changes and arc shift moves}, Journal of Knot Theory and Its Ramifications \textbf{29} (2020), paper number 2042008, 24 pages (in press).  

\bibitem{Winter} 
B.~Winter, {\it The classification of spun torus knots}, Journal of Knot Theory and Its Ramifications \textbf{18} (2009), no.~9, 1287--1298.

\end{thebibliography}
\end{document}